\documentclass{amsart}
\usepackage{todonotes}
\usepackage{xypic}
\usepackage{graphicx, amsmath, amssymb, amsthm}
\usepackage{hyperref} 
\usepackage[mathscr]{euscript}

\newcommand{\support}[1]{The author was supported by {#1}.}

\newcommand{\NSFThree}{NSF Grant DMS-1509652}

\newcommand{\MAHAddress}{University of California Los Angeles, Los Angeles, CA 90095}
\newcommand{\MAHemail}{\tt{mikehill@math.ucla.edu}}


\newcommand{\R}{{\mathbb R}}

\newcommand{\MUR}{MU_{\R}}



\DeclareMathOperator{\Map}{Map}

\DeclareMathOperator{\Spec}{Spec}

\DeclareMathOperator{\Supp}{Supp}
\DeclareMathOperator{\coSupp}{V}
\newcommand{\ChromPrime}{\mathscr{P}}




\newcommand{\m}[1]{{\protect\underline{#1}}}

\newcommand{\mSet}{\m{\Set}}

\newcommand{\cc}[1]{\mathcal #1}

\newcommand{\cO}{\cc{O}}
\newcommand{\ccZ}{\cc{Z}}



\newcommand{\Sp}{\mathcal Sp}

\newcommand{\Set}{\mathcal Set}

\newcommand{\Comm}{\mathcal Comm}
\newcommand{\Ninfty}{N_\infty}

\mathchardef\mhyphen=45


\numberwithin{equation}{section}

\newtheorem{theorem}{Theorem}[section]
\newtheorem{lemma}[theorem]{Lemma}
\newtheorem{corollary}[theorem]{Corollary}

\newtheorem{proposition}[theorem]{Proposition}

\newtheorem*{theorem*}{Theorem}
\newtheorem*{proposition*}{Proposition}

\theoremstyle{remark}
\newtheorem{remark}[theorem]{Remark}

\theoremstyle{definition}

\newtheorem{definition}[theorem]{Definition}


\newcommand{\defemph}[1]{\textbf{#1}}

\begin{document}
 
\title[Equivariant Localizations]{Equivariant Chromatic Localizations and Commutativity}

\author{Michael~A.~Hill}
\address{\MAHAddress}
\email{\MAHemail}
\thanks{\support{\NSFThree}}

\begin{abstract}
In this paper, we study the extent to which Bousfield and finite localizations relative to a thick subcategory of equivariant finite spectra preserve various kinds of highly structured multiplications. Along the way, we describe some basic, useful results for analyzing categories of acyclics in equivariant spectra, and we show that Bousfield localization with respect to an ordinary spectrum (viewed as an equivariant spectrum with trivial action) always preserves equivariant commutative ring spectra.
\end{abstract}

\keywords{Bousfield localization, equivariant homotopy, chromatic homotopy}

\maketitle

\section{Introduction}
Bousfield localization is a fundamental tool in modern algebraic topology. The ability to focus on pieces of the stable homotopy category allows in many cases for more conceptual or algebraic descriptions and computations. Just as in ordinary algebra, classical Bousfield localization is always a lax monoidal functor, preserving commutative ring objects and allowing one to talk about localizations in categories of modules or algebras.  

Bousfield localization plays an equally important role in equivariant homotopy, but here, the functors need not preserve commutative ring spectra. This was originally shown by McClure for the Greenlees-May Tate spectrum \cite{McClure}, and in work with Hopkins, we showed sufficient conditions for when a general Bousfield localization preserves equivariant commutative ring spectra \cite{HHLocalization}. Moreover, we showed that Bousfield localization always preserves algebras over a trivial $E_\infty$ operad (so an $E_\infty$ operad viewed as a $G$-equivariant operad by endowing it with a trivial action), and in work with Blumberg, we verified that this is sufficient to have a good symmetric monoidal catgegory of modules \cite{BHNinftyCat}. Thus essentially all of the desired classical properties hold.

For equivariant homotopy, however, we can ask for more. If $R$ is a genuine equivariant commutative ring spectrum, then the category of $R$-modules has a natural $G$-symmetric monoidal enhancement. 
More generally, if $R$ is an algebra over a linear isometries operad, then the category of $R$-modules inherits those norms which the linear isometries operad parameterizes. It is therefore a natural question to see if a particular Bousfield localization preserves these richer structures. 

In this paper, we study Bousfield and finite localizations for equivariant chromatic localizations. Using Balmer's notion of the spectrum of a tensor triangulated category, Balmer and Sanders determined the prime spectrum of the category of $G$-spectra for finite groups $G$. Coupled with the natural Balmer-Zariski topology on this spectrum, this provides a complete classification of the thick subcategories of finite $G$-spectra. Associated to any such thick subcategory are corresponding Bousfield and finite localizations, and these are our primary focus. In particular, we prove in Theorem~\ref{thm:ChromaticBousfieldComm} below sufficient conditions for these localizations to preserve various operadic algebras.

Along the way, we provide several tools which are helpful in analyzing equivariant Bousfield localizations. The role of geometric fixed points here cannot be overstated, as it provides elegant (and surprisingly checkable) reformulations of what it means for a $G$-spectrum $Z$ to be acyclic. This has several amusing consequences for the kinds of spectra which arose in the solution to the Kervaire invariant one problem which we could not resist including.

\subsection*{Notation and conventions}
In all that follows, $G$ will denote a fixed finite group. In general, the letters $H$, $J$ and $K$ will be reserved for subgroups of $G$. Capital letters close to $X$ in the alphabet will denote $G$-spaces, while capital letters close to $T$ will denote $G$-sets. Spectra will be often denoted with letters like $E$ (or $Z$ when the role as an acyclic is being stressed).

\subsubsection*{Category Names and assumptions}
We work in the category of genuine $G$-spectra, and all of our statements are implicitly homotopical. For concreteness, the reader is invited to use orthogonal $G$-spectra, where all the needed homotopical properties were checked in \cite[Appendix B]{HHR}. The category of genuine $G$-spectra will be denoted $\Sp^{G}$ and the category of spectra will be denoted $\Sp$. For either of these, the full subcategory of compact objects will be indicated by a subscript ``$c$''.

The category of genuine equivariant commutative ring spectra (the commutative monoids in one of the good symmetric monoidal model categories of $G$-spectra) will be denoted $\Comm^{G}$.

The category of finite $G$-sets and $G$-equivariant maps will be denoted $\Set^{G}$.

\subsubsection*{Familiar functors}
The geometric fixed points functor will be denoted $\Phi^{G}$. 

If $H\subset G$, then $i_{H}^{\ast}$ will denote the restriction functor from $G$-spectra to $H$-spectra. The functor $\Phi^{H}$ will also be used to denote the composite functor $\Phi^{H}\circ i_{H}^{\ast}$ on genuine $G$-spectra.

\subsection*{Acknowledgements}

We thank Tyler Lawson and Andrew Blumberg for careful readings of interminably many drafts of this short paper. Extra special thanks go to Paul Balmer for carefully explaining to us on several occasions the construction and properties of his spectrum and for closely reading an early draft.

\section{Equivariant commutativity}

\subsection{The norm and geometric fixed points}
One of the most important tools developed in the solution of the Kervaire invariant one problem was a homotopically meaningful norm functor
\[
N_H^G\colon\Sp^H\to\Sp^G.
\]
This is a strong symmetric monoidal left Quillen functor, and on equivariant commutative ring spectra, it participates in a Quillen adjunction as the left adjoint to the forgetful functor:
\[
N_H^G\colon \Comm^H\rightleftarrows\Comm^G\colon i_H^\ast.
\]
In particular, for any $G$-equivariant commutative ring spectrum $R$, there is a canonical map of $G$-equivariant commutative ring spectra
\[
N_H^G i_H^\ast R\to R.
\]
It is the requirement that a localization play nicely with these maps that confounds equivariant Bousfield localization. From a homotopical point of view, these are extra structure which we must control the behavior of. In our analysis of various trivial and chromatic localizations, we will also need to understand the geometric fixed points of the norm functor. Luckily, this is easily determined by a kind of generalized diagonal.

\begin{lemma}[{\cite[Proposition B.209]{HHR}, \cite[Proposition 2.19]{GoodTHH}}]\label{lem:Diagonal}
For any $K, H\subset G$ and for any $H$-spectrum $E$, the diagonal gives an equivalence of spectra
\[
\Phi^K N_H^G E\xrightarrow{\simeq} \bigwedge_{g\in K\backslash G/H} \Phi^{K^{g}\cap H} E.
\]
\end{lemma}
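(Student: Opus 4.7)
The plan is to reduce the statement to a double coset decomposition for the restriction of the norm, combined with the basic identity $\Phi^K \circ N_L^K \simeq \Phi^L$ for $L \subset K$. First I would establish (or extract from HHR Appendix B) the Mackey-type decomposition
\[
i_K^\ast N_H^G E \;\simeq\; \bigwedge_{g \in K\backslash G/H} N_{K^g \cap H}^K c_g^\ast E,
\]
where $c_g^\ast E$ denotes the $(K^g \cap H)$-spectrum obtained by pulling $E$ back along conjugation by $g$. This falls out of the indexed smash product definition of $N_H^G$ together with the $K$-orbit decomposition $G/H = \coprod_g K/(K \cap gHg^{-1})$ indexed by double cosets.

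Next I would apply $\Phi^K$ to both sides. Because $\Phi^K$ is strong symmetric monoidal, the right-hand side becomes $\bigwedge_g \Phi^K N_{K^g \cap H}^K c_g^\ast E$. The basic identity $\Phi^K \circ N_L^K \simeq \Phi^L$ for $L \subset K$ --- the special case $G = K$ of the lemma, which can be verified directly from any point-set model of $\Phi^K$ --- collapses each term to $\Phi^{K^g \cap H} c_g^\ast E$, and the conjugation invariance of geometric fixed points then identifies this with $\Phi^{K^g \cap H} E$. Stringing these steps together produces the claimed equivalence.

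The main obstacle is proving the double coset formula cleanly and checking that the resulting equivalence is implemented by the specific \emph{diagonal} map of the statement, rather than merely some abstract equivalence of spectra. This requires careful bookkeeping of twisted actions through the conjugation functors $c_g$. As a backup, one can bypass the double coset formula entirely by reducing to suspension spectra $E = \Sigma^\infty_+ X$ via the usual cellular-induction argument, in which case both sides boil down to the elementary space-level identity $\Map^H(G, X)^K \cong \prod_g X^{K^g \cap H}$, verified by restricting a fixed equivariant map to double coset representatives and observing that the two equivariance conditions together force $f(g) \in X^{K^g \cap H}$.
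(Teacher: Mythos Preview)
The paper does not supply its own proof of this lemma; it simply cites \cite[Proposition~B.209]{HHR} and \cite[Proposition~2.19]{GoodTHH} and moves on. Your outline is essentially the argument given in those references: decompose $i_K^\ast N_H^G$ via the double coset decomposition of $G/H$ as a $K$-set, apply the strong symmetric monoidal functor $\Phi^K$ factor-by-factor, and use $\Phi^K N_L^K\simeq\Phi^L$ together with conjugation invariance to identify each factor. So there is nothing to contrast---your proposal matches the cited proofs.

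One small indexing slip worth fixing: in your displayed formula you write $N_{K^g\cap H}^K c_g^\ast E$, but $K^g\cap H$ is a subgroup of $H$, not of $K$, so this norm is not well-typed as written. The double coset factor should be $N_{K\cap {}^gH}^{K}\big(c_g^\ast i_{K^g\cap H}^\ast E\big)$, where $c_g$ transports the $(K^g\cap H)$-spectrum $i_{K^g\cap H}^\ast E$ to a $(K\cap{}^gH)$-spectrum; after applying $\Phi^K$ and conjugation invariance, this becomes $\Phi^{K^g\cap H}E$ as you claim. The rest of your argument, including the backup reduction to suspension spectra, is fine.
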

%

It is conceptually convenient to also include notation for several other endo-functors of the category of $G$-spectra.

\begin{definition}
For any $H\subset G$ and for any $G$-spectrum $E$, let
\[
N^{G/H}(E):= N_H^G i_H^\ast E.
\]
If 
\[
T=G/H_1\amalg\dots\amalg G/H_n
\]
is a finite $G$-set, then let
\[
N^{T}E:=\bigwedge_{i=1}^n N^{G/H_i}E.
\]
\end{definition}
\begin{remark}
Although the definition as given involves choices of orbit decompositions, one can make this coordinate free by defining the $T$-norm as the symmetric monoidal pushforward of the constant $B_GT$-shaped diagram with value $E$, where $B_GT$ is the translation category of $T$, mirroring the original discussion in \cite[Appendix A.3]{HHR}.
\end{remark}

\subsection{$\Ninfty$ operads and algebras}
The failure of equivariant Bousfield localization to preserve commutative ring spectra should be viewed as a peculiarity of the monoidal model structure: in genuine $G$-spectra, the commutative monoids are have not only a homotopy coherent commutative multiplication (an ordinary $E_\infty$ structure) but also coherent norm maps relating the value of the ring at various subgroups. Classically, this is packaged via a $G$-$E_\infty$ operad. In work with Blumberg, we generalized the notion of a $G$-$E_\infty$ operad to cover all kinds of coherently commutative multiplications with some norms on genuine $G$-spectra. We briefly review the relevant details now.

\begin{definition}[{\cite[Defintion 3.7]{BHNinfty}}]\label{def:Ninfty}
An $\Ninfty$ operad is an operad $\cO$ in $G$-spaces such that
\begin{enumerate}
\item The space $\cO_0$ is $G$-contractible,
\item the action of $\Sigma_n$ on $\cO_n$ is free, and
\item the space $\cO_n$ is a universal space for a family $\mathcal F_n(\cO)$ of subgroups of $G\times\Sigma_n$ which contains all subgroups of the form $H\times\{e\}$.
\end{enumerate}
\end{definition}

There is a purely combinatorial way to package the collection of subgroups which show up in the families for an $\Ninfty$ operad, and this is closely connected to the structure of algebras over the operad.  For this, recall that a symmetric monoidal coefficient system is a contravariant functor from the orbit category of $G$ to the category of symmetric monoidal categories and strong monoidal functors. The prototype of such a symmetric monoidal category is $\mSet$, for which
\[
\mSet(G/H):=\Set^H.
\]

\begin{definition}
An indexing system $\m{\cO}$ is a full, symmetric monoidal sub-coefficient system of $\mSet$ such that
\begin{enumerate}
\item For all orbits $G/H$, $\m{\cO}(G/H)$ is closed under finite limits, and
\item if $H/K\in \m{\cO}(G/H)$ and $T\in\m{\cO}(G/K)$, then $H\times_KT\in \m{\cO}(G/H)$.
\end{enumerate}
\end{definition}
In particular, any indexing system contains all trivial sets and is closed under passage to subobjects. 

Associated to any $\Ninfty$ operad $\cO$ is an indexing system $\m{\cO}$.

\begin{definition}[{\cite[Definition 4.3]{BHNinfty}}]
Let $\cO$ be an $\Ninfty$ operad. Let $T$ be a finite $H$-set of cardinality $n$, classified by a map $H\to\Sigma_n$, and let $\Gamma_T$ be the graph of this homomorphism. Then $T$ is in $\m{\cO}(G/H)$ if and only if $\cO_n^{\Gamma_T}\simeq \ast$. If $T\in\m{\cO}(G/H)$, then we say that {\defemph{$T$ is admissible}}.
\end{definition}

This construction gives an equivalence of categories, so we will henceforth ignore the distinction between an $\Ninfty$ operad and an indexing system.

\begin{theorem}
The assignment $\cO\mapsto\m{\cO}$ gives a fully-faithful embedding of the homotopy category of $\Ninfty$ operads into the poset of indexing systems \cite[Theorem 3.24]{BHNinfty} which is essentially surjective \cite{Rubin}, \cite[Corollary IV]{BonventrePer}, \cite[Section 4]{GutierrezWhite}.
\end{theorem}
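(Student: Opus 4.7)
The plan is to separate the claim into the fully-faithful embedding (the work with Blumberg cited as \cite[Theorem 3.24]{BHNinfty}) and essential surjectivity (the harder and more recent result), and to describe how each part would be approached.

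First I would check that the assignment $\cO \mapsto \m{\cO}$ genuinely lands in indexing systems. That $\m{\cO}(G/H)$ contains all trivial sets and is closed under subobjects is immediate from the freeness of the $\Sigma_n$-action and the requirement that $H\times\{e\}\in\mathcal F_n(\cO)$ for every $n$. Closure under finite limits then reduces to closure under products together with containing the empty set, and the latter is exactly the condition that $\cO_0$ be $G$-contractible. Both the product closure and the self-induction axiom are extracted from the operadic composition maps: given admissible sets with contractible fixed-point subspaces on $\cO_m$, $\cO_n$, etc., composition produces contractible fixed points on $\cO_{mn}$ (or $\cO_{m+n}$), and a standard computation with graphs of wreath-product homomorphisms $\Sigma_{m_1}\times\cdots\times\Sigma_{m_n}\rtimes\Sigma_n\hookrightarrow\Sigma_{m_1+\cdots+m_n}$ identifies these with precisely the products and induced sets appearing in the indexing system axioms.

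For fully-faithfulness, the key is that each $\cO_n$ is, by definition, a universal space for $\mathcal F_n(\cO)$ with free $\Sigma_n$-action. Consequently, the space of $(G\times\Sigma_n)$-equivariant maps $\cO_n\to\cO'_n$ is either contractible (when $\mathcal F_n(\cO)\subseteq\mathcal F_n(\cO')$) or empty. Assembling across arities, the mapping space of operads is contractible exactly when $\m{\cO}\subseteq\m{\cO'}$ and empty otherwise, which matches the morphism sets in the poset of indexing systems.

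The main obstacle is essential surjectivity: given an indexing system $\m{I}$, one must actually build an $\Ninfty$ operad realizing it. The natural candidate for the $n$-ary operations is a universal space $E\mathcal F_n(\m{I})$, where $\mathcal F_n(\m{I})$ is the family generated by the graphs $\Gamma_T$ as $T$ ranges over admissible $H$-sets of cardinality $n$. The real work is in assembling these spaces into a bona fide operad. The indexing system axioms translate exactly into the closure conditions on these families needed for the operadic composition maps to exist at the level of families, but realizing this point-set is nontrivial and is where the three cited approaches diverge: Rubin produces a combinatorial construction, Bonventre-Pereira realize $\m{I}$ via a genuine equivariant operad and then pass to an underlying classical operad, and Gutierrez-White lift along a Quillen equivalence between operads and suitable labelled structures. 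Once any of these produces the operad, the identification $\m{\cO}=\m{I}$ is immediate from the definition of the associated indexing system.
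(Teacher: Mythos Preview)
The paper gives no proof of this theorem at all: it is stated as a result imported from the literature, with the citations \cite[Theorem 3.24]{BHNinfty}, \cite{Rubin}, \cite[Corollary IV]{BonventrePer}, and \cite[Section 4]{GutierrezWhite} serving in lieu of argument. There is therefore nothing in the paper against which to compare your proposal.

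That said, your sketch is a faithful outline of what those references actually do. The verification that $\m{\cO}$ is an indexing system via operadic composition and the wreath-product calculation is exactly the content of \cite[\S4]{BHNinfty}; the fully-faithfulness argument using the universal-space property of the $\cO_n$ is the argument behind \cite[Theorem 3.24]{BHNinfty}; and your summary of the three realization strategies for essential surjectivity accurately reflects the distinct approaches of Rubin, Bonventre--Pereira, and Guti\'errez--White. One small correction: closure under finite limits in a coefficient system of finite $H$-sets is really closure under disjoint union (coproduct) together with subobjects, not products---the relevant ``limits'' in $\mSet$ are pullbacks along maps to a point, i.e.\ products in $\Set^H$, but the symmetric monoidal structure being preserved is $\amalg$, and it is closure under $\amalg$ that comes from the operad structure maps $\cO_m\times\cO_n\to\cO_{m+n}$. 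Otherwise your outline is sound; it simply goes well beyond what the paper itself supplies.
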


Since the poset of indexing systems has an initial object $\m{\cO}^{tr}$ consisting of the indexing system of sets with a trivial action, there is a homotopy initial $\Ninfty$ operad. This is just an ordinary, non-equivariant $E_\infty$ operad viewed as a $G$-operad by endowing it with a trivial $G$-action. Thus any $\cO$-algebra has a canonical coherently commutative multiplication, since it is an algebra over a trivial $E_\infty$ operad. The role of the indexing system here is to parameterize the additional norms present in an $\cO$-algebra.

\begin{theorem}[{\cite[Lemma 6.6]{BHNinfty}}]
If $H/K$ is an admissible $H$-set for $\cO$, and $R$ is an $\cO$-algebra in spectra, then we have a contractible space of maps
\[
N_K^Hi_K^\ast R\to i_H^\ast R.
\]
\end{theorem}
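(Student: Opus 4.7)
The plan is to realize the desired map as a restriction of the $\cO$-algebra structure map for $R$ along the graph subgroup associated to $H/K$, and then to use admissibility to see that the resulting space of maps is contractible.

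Let $n = [H:K]$ and let $\Gamma = \Gamma_{H/K} \subset H \times \Sigma_n$ denote the graph of the homomorphism $H \to \Sigma_n$ classifying the $H$-set $H/K$. By definition, admissibility of $H/K$ is precisely the statement that $\cO_n^{\Gamma} \simeq \ast$. Since $R$ is an $\cO$-algebra, restriction to $H$ supplies for each $n$ an $H \times \Sigma_n$-equivariant structure map
\[
\mu_n\colon \cO_n \wedge R^{\wedge n} \to R,
\]
where $\Sigma_n$ permutes the smash factors on both $\cO_n$ and $R^{\wedge n}$ and acts trivially on $R$. By adjunction, this corresponds to an $H \times \Sigma_n$-equivariant map $\cO_n \to \Map(R^{\wedge n}, R)$.

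Restrict this map along the inclusion $\Gamma \hookrightarrow H \times \Sigma_n$ and identify $\Gamma$ with $H$ via projection to the first factor. The key identification is that, under this twisted $H$-action, $R^{\wedge n}$ is precisely $N_K^H i_K^\ast R$ as an $H$-spectrum: the $\Sigma_n$-component of the $\Gamma$-action encodes the left translation action of $H$ on $H/K$, while the $H$-component provides the action on each factor. The target $R$ is sent to $i_H^\ast R$. Passing to $\Gamma$-fixed points therefore yields a map
\[
\cO_n^{\Gamma} \to \Map^H\bigl(N_K^H i_K^\ast R,\, i_H^\ast R\bigr),
\]
and since the source is contractible by admissibility, its image realizes a contractible space of $H$-equivariant maps of the desired form.

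The main technical obstacle is verifying that $R^{\wedge n}$ with its $\Gamma$-action is $N_K^H i_K^\ast R$. Concretely, this requires choosing a set of coset representatives for $H/K$, checking that $\Gamma$ acts by permuting the corresponding factors via the $H$-action on $H/K$ and within each factor through the appropriate stabilizer conjugate to $K$, and matching this with the formula for the norm as an iterated smash product. The coordinate-free description of $N^T$ as the symmetric monoidal pushforward along $B_G T$ mentioned in the remark after the definition of $N^T$ eliminates the choice of representatives and makes this identification natural; the rest of the argument is formal.
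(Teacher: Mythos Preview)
The paper does not give a proof of this statement at all: it is quoted as \cite[Lemma 6.6]{BHNinfty} and no argument appears in the text. So there is nothing in the paper to compare your proposal against directly.

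That said, your sketch is the standard argument for this fact, and it is essentially what appears in the cited reference. The identification of $(i_H^\ast R)^{\wedge n}$, equipped with the $\Gamma_{H/K}$-action, with $N_K^H i_K^\ast R$ is exactly how the norm is related to the operadic structure maps, and admissibility of $H/K$ is by definition the contractibility of $\cO_n^{\Gamma_{H/K}}$. One small point of phrasing: your argument produces a map from the contractible space $\cO_n^{\Gamma}$ into $\Map^H(N_K^H i_K^\ast R, i_H^\ast R)$, which is the correct reading of ``a contractible space of maps'' here; you are not claiming (and should not claim) that the entire mapping space is contractible.
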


\section{Equivariant Bousfield classes}

\subsection{Equivariant localizing subcategories} 
\begin{definition}
If $E$ is a $G$-spectrum, let $\ccZ_E^G$ denote the category of $E$-acyclics: the full subcategory of $\Sp^G$ consisting of all $Z$ such that $E\wedge Z$ is equivariantly contractible.

If we are working non-equivariantly, then the acyclics will be denoted simply $\ccZ_E$ with no superscript.
\end{definition}

Since geometric fixed points detect weak equivalences and are strong symmetric monoidal, this gives another, conceptually simpler way to understand membership in $\ccZ_E$.

\begin{proposition}\label{prop:GeomFPAcyclics}
A $G$-spectrum $Z$ is a $G$-equivariant $E$-acyclic spectrum if and only if for all $H\subset G$, $\Phi^H(Z)$ is non-equivariant $\Phi^H(E)$-acyclic spectrum: 
\[
\ccZ_E^G=\bigcap_{H\subset G} \big(\Phi^H\big)^{-1}(\ccZ_{\Phi^HE}).
\]
\end{proposition}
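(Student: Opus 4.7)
The plan is to combine two standard properties of the geometric fixed points functors $\Phi^H$: first, that they are jointly conservative on genuine $G$-spectra (a $G$-spectrum is contractible if and only if $\Phi^H$ applied to it is contractible for every subgroup $H\subset G$), and second, that each $\Phi^H$ is strong symmetric monoidal. Given these, the equivalence in the proposition is essentially immediate, and the statement can be unwound as a chain of iffs.

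First I would fix $Z\in\Sp^G$ and consider the condition that $E\wedge Z$ is $G$-equivariantly contractible. By joint conservativity, this is equivalent to the statement that $\Phi^H(E\wedge Z)\simeq\ast$ for every subgroup $H\subset G$. Next, I would invoke strong symmetric monoidality of $\Phi^H$ to produce a natural equivalence
\[
\Phi^H(E\wedge Z)\simeq \Phi^H(E)\wedge\Phi^H(Z),
\]
so that the previous condition becomes: for every $H\subset G$, the non-equivariant spectrum $\Phi^H(E)\wedge\Phi^H(Z)$ is contractible. But this last statement is, by definition, exactly the assertion that $\Phi^H(Z)\in \ccZ_{\Phi^H E}$ for every $H\subset G$, which in turn is the set-theoretic description of $\bigcap_{H\subset G}(\Phi^H)^{-1}(\ccZ_{\Phi^HE})$.

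There is no serious obstacle here; the only thing to be slightly careful about is to cite the right results. Joint conservativity of the collection $\{\Phi^H\}$ on orthogonal $G$-spectra is verified in \cite[Appendix B]{HHR}, and the strong monoidality of each geometric fixed point functor is also established there, so both ingredients are available in the model-categorical setting the paper has fixed. I would therefore present the proof as a brief two-line chain of equivalences, citing these two properties.
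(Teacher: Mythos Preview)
Your proposal is correct and follows exactly the paper's own argument: it applies joint conservativity of the geometric fixed points functors to $E\wedge Z$ and then uses the strong monoidality equivalence $\Phi^{H}(E\wedge Z)\simeq \Phi^{H}(E)\wedge\Phi^{H}(Z)$. There is nothing to add or change.
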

\begin{proof}
A genuine $G$-spectrum $E'$ is equivariantly contractible if and only if for all $H\subset G$, $\Phi^{H}(E')$ is contractible. Thus if $E$ is a fixed $G$-spectrum and $Z$ is any other $G$-spectrum, then $E\wedge Z$ is contractible if and only if for all $H\subset G$,
\[
\Phi^{H}(E\wedge Z)\simeq \Phi^{H}(E)\wedge \Phi^{H}(Z)\simeq\ast.\qedhere
\]
\end{proof}

The category of $E$-acyclics is also an equivariant subcategory in that it is closed under restriction and induction. 

\begin{proposition}\label{prop:RestrictionInduction}
For all $H\subset G$, we have natural inclusions
\[
i_H^\ast\ccZ_E^G\subset \ccZ_{i_H^\ast E}^H\text{ and }G_+\wedge_H\ccZ_{i_H^\ast E}^H\subset \ccZ_{E}^G.
\]
\end{proposition}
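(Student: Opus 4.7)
The plan is to verify each inclusion using the monoidal compatibilities of restriction and induction; no geometric fixed-point analysis is strictly required, though the first inclusion can also be read off directly from Proposition~\ref{prop:GeomFPAcyclics}.

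For the first inclusion, suppose $Z \in \ccZ_E^G$, so $E \wedge Z$ is $G$-equivariantly contractible. Since $i_H^\ast$ is strong symmetric monoidal and sends $G$-equivariantly contractible spectra to $H$-equivariantly contractible spectra (a $G$-spectrum is contractible iff all its geometric fixed points are, and restriction only discards some of these), we obtain
\[
i_H^\ast E \wedge i_H^\ast Z \simeq i_H^\ast (E \wedge Z) \simeq \ast,
\]
so $i_H^\ast Z \in \ccZ_{i_H^\ast E}^H$.

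For the second inclusion, the main tool is the projection formula (Frobenius reciprocity) for the adjunction $(G_+ \wedge_H -, i_H^\ast)$: there is a natural equivalence
\[
G_+ \wedge_H \bigl(Y \wedge i_H^\ast X\bigr) \simeq (G_+ \wedge_H Y) \wedge X
\]
for any $H$-spectrum $Y$ and $G$-spectrum $X$. Taking $X = E$ and $Y \in \ccZ_{i_H^\ast E}^H$, the left-hand side becomes $G_+ \wedge_H \ast \simeq \ast$, since induction preserves contractibility. Hence $(G_+ \wedge_H Y) \wedge E \simeq \ast$, and $G_+ \wedge_H Y \in \ccZ_E^G$.

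Both arguments are formal, and I do not expect a genuine obstacle. The only point that demands care is the projection formula: it must hold not merely on homotopy categories but with enough naturality to be applied pointwise on acyclics. In the orthogonal $G$-spectra model this is completely standard and is among the basic compatibilities verified in \cite[Appendix B]{HHR}, so the proof amounts to citing those facts and chaining together the displayed equivalences above.
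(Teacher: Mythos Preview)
Your proof is correct and follows essentially the same approach as the paper: the first inclusion is dispatched via the strong symmetric monoidality of $i_H^\ast$, and the second via the Frobenius/projection formula $(G_+\wedge_H Y)\wedge E\simeq G_+\wedge_H(Y\wedge i_H^\ast E)$. The paper's version is terser, simply calling the first inclusion ``obvious'' and writing down the Frobenius relation for the second, but the content is identical.
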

\begin{proof}
The first inclusion is obvious, since $i_H^\ast$ is a strong symmetric monoidal functor. For the second, let $X$ be in $\ccZ_{i_H^\ast E}$. The Frobenius relation
\[
\big(G_+\wedge_H X\big)\wedge E\simeq G_+\wedge_H(X\wedge i_H^\ast E)
\]
then shows that $G_+\wedge_H X$ is $E$-acyclic.
\end{proof}

\begin{corollary}
For all $H\subset G$ and $Z\in \ccZ_E^G$, we have
\[
G/H_+\wedge Z\in\ccZ_E^G.
\]
\end{corollary}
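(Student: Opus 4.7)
The plan is to deduce this immediately from Proposition~\ref{prop:RestrictionInduction} by rewriting $G/H_+ \wedge Z$ as an induced spectrum. The standard identification gives
\[
G/H_+ \wedge Z \simeq G_+ \wedge_H i_H^\ast Z,
\]
so the corollary reduces to showing that the right-hand side lies in $\ccZ_E^G$ whenever $Z$ does.

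First, I would apply the first inclusion of Proposition~\ref{prop:RestrictionInduction} to the hypothesis $Z \in \ccZ_E^G$, yielding $i_H^\ast Z \in \ccZ_{i_H^\ast E}^H$. Then I would apply the second inclusion of the same proposition to $i_H^\ast Z$, concluding that $G_+ \wedge_H i_H^\ast Z$ lies in $\ccZ_E^G$. Combined with the displayed equivalence, this gives the desired membership $G/H_+ \wedge Z \in \ccZ_E^G$.

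There is essentially no obstacle here: the only content beyond Proposition~\ref{prop:RestrictionInduction} is the identification $G/H_+ \wedge Z \simeq G_+ \wedge_H i_H^\ast Z$, which is a standard consequence of the fact that $G_+ \wedge_H (-)$ is left adjoint to $i_H^\ast$ and that smashing with the unit $S^0$ on the $H$-side produces the orbit $G/H_+$ upon induction. This identity can also be viewed as a degenerate instance of the Frobenius relation used in the proof of the proposition, applied with the roles of the spectra reversed.
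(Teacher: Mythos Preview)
Your proof is correct and matches the paper's intent: the corollary is stated immediately after Proposition~\ref{prop:RestrictionInduction} with no explicit proof, so the intended argument is precisely to combine the identification $G/H_+\wedge Z\simeq G_+\wedge_H i_H^\ast Z$ with the two inclusions of that proposition, as you do.
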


\subsection{Application: acyclics for Kervaire spectra}
Proposition~\ref{prop:GeomFPAcyclics} gives a way to readily determine the acyclics for the kinds of chromatic spectra which arose in the proof of the Kervaire invariant one problem. In particular, we can determine the acyclics for any of the spectra which arise as particular localizations of the norms of the Landweber-Araki Real bordism spectrum $\MUR$.

Recall from \cite[Section 5.4.2]{HHR} that if $G=C_{2^n}$, then there are classes 
\[
\bar{r}_i^G\in \pi_{(2^i-1)\rho_2} N_{C_2}^{C_{2^n}} \MUR
\]
such that 
\[
\Phi^{C_{2^n}} \Big(N_{C_2}^{C_{2^n}} \MUR \big[(N_{C_2}^{C_{2^n}} \bar{r}_i)^{-1}\big]\Big)
\]
is contractible \cite[Proposition 5.50]{HHR}.

\begin{proposition}\label{prop:UnderlyingAcyclics}
Let $G=C_{2^n}$.  Let $\bar{D}$ be any class in $\pi_{m\rho_{2^n}} N_{C_2}^{C_{2^n}} \MUR$ such that for all $C_2\subset H\subset G$, there is a $j_H$ such that $N_{C_2}^{C_{2^n}} r_{j_H}^H$ divides $\bar{D}$. Finally, let $M$ be any module over the commutative ring spectrum $N_{C_2}^{C_{2^n}} \MUR[\bar{D}^{-1}]$. Then 
\[
\ccZ_{M}^G=\big(i_{\{e\}}^{\ast}\big)^{-1} \ccZ_{i_{\{e\}}^\ast M}.
\]
\end{proposition}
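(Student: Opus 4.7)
The plan is to reduce via Proposition~\ref{prop:GeomFPAcyclics} to a statement about geometric fixed points at each non-trivial subgroup, and then use the cyclic structure of $G=C_{2^n}$ together with the HHR vanishing of geometric fixed points of localized $\MUR$-norms.

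One containment is automatic: by Proposition~\ref{prop:RestrictionInduction}, if $Z\in\ccZ_M^G$ then $i_{\{e\}}^\ast Z\in\ccZ_{i_{\{e\}}^\ast M}$, so $\ccZ_M^G\subset\big(i_{\{e\}}^\ast\big)^{-1}\ccZ_{i_{\{e\}}^\ast M}$. For the reverse containment, let $Z$ be a $G$-spectrum whose underlying spectrum is $i_{\{e\}}^\ast M$-acyclic. By Proposition~\ref{prop:GeomFPAcyclics}, $Z\in\ccZ_M^G$ iff $\Phi^H(Z)$ is $\Phi^H(M)$-acyclic for every subgroup $H\subset G$, and the hypothesis on $Z$ covers $H=\{e\}$. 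Thus it suffices to show $\Phi^H(M)\simeq\ast$ for every non-trivial $H$.

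Since $M$ is a module over $R:=N_{C_2}^{C_{2^n}}\MUR[\bar{D}^{-1}]$ and $\Phi^H$ is strong symmetric monoidal, $\Phi^H(M)$ is a module over $\Phi^H(R)$, so it is enough to show $\Phi^H(R)\simeq\ast$. The crucial group-theoretic fact is that $C_{2^n}$ is cyclic of prime-power order and hence has a unique subgroup of each order; so every non-trivial subgroup $H\subset G$ contains $C_2$. The divisibility hypothesis therefore supplies a class $N_{C_2}^{C_{2^n}} r_{j_H}^H$ dividing $\bar{D}$, and this class becomes a unit in $R$.

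To finish, I would compute $\Phi^H$ of $N_{C_2}^{C_{2^n}}\MUR$ via Lemma~\ref{lem:Diagonal}; using $C_2\subset H$ and the fact that $G$ is abelian, the double coset space is $G/H$ and each intersection $H^g\cap C_2$ is $C_2$, giving
\[
\Phi^H\bigl(N_{C_2}^{C_{2^n}}\MUR\bigr)\simeq\bigwedge_{G/H}\Phi^{C_2}\MUR.
\]
Under this identification, $\Phi^H\bigl(N_{C_2}^{C_{2^n}} r_{j_H}^H\bigr)$ is the corresponding tensor power of $\Phi^{C_2}(r_{j_H}^H)$, and, just as in the $H=G$ case recorded in HHR~5.50, the class $r_{j_H}^H$ is chosen precisely so that inverting its norm trivializes the target. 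The main obstacle I anticipate is exactly this last identification: matching the HHR vanishing, stated for $\Phi^{C_{2^n}}$ of the full norm, with the required vanishing of $\Phi^H$ at a proper intermediate subgroup, via careful bookkeeping with the diagonal formula and the behaviour of normed classes under restriction.
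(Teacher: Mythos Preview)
Your approach is correct and is essentially the same as the paper's: show that $\Phi^H(M)\simeq\ast$ for every non-trivial $H$ and then invoke Proposition~\ref{prop:GeomFPAcyclics}, which immediately collapses the intersection $\bigcap_H(\Phi^H)^{-1}\ccZ_{\Phi^H M}$ to the single $H=\{e\}$ term. The paper's proof is a two-line version of what you wrote; it simply asserts that the divisibility hypothesis forces all non-trivial geometric fixed points of $M$ to vanish and then cites Proposition~\ref{prop:GeomFPAcyclics}. Two minor remarks: first, you do not need the separate treatment of one containment via Proposition~\ref{prop:RestrictionInduction}, since once $\Phi^H M\simeq\ast$ for non-trivial $H$ both containments drop out of Proposition~\ref{prop:GeomFPAcyclics} simultaneously; second, the ``main obstacle'' you anticipate---extending the HHR vanishing from $\Phi^{C_{2^n}}$ to $\Phi^H$ for intermediate $H$---is precisely what the hypothesis on $\bar D$ is designed to encode, and the paper simply takes this as input from \cite[Section 5.4]{HHR} rather than re-deriving it.
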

\begin{proof}
The conditions ensure that all non-trivial geometric fixed points of $M$ are contractible. The result then follows from Proposition~\ref{prop:GeomFPAcyclics}.
\end{proof}

\begin{corollary}\label{cor:UnderlyingWedges}
If $M$ is a wedge of spectra $M_i$, each of which is a module over $N_{C_2}^{C_{2^n}} \MUR[\bar{D}_i^{-1}]$ for some $\bar{D}_i$ as in Proposition~\ref{prop:UnderlyingAcyclics}, then 
\[
\ccZ_{M}^G=\big(i_{\{e\}}^{\ast}\big)^{-1} \ccZ_{i_{\{e\}}^\ast M}.
\]
\end{corollary}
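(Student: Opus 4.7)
The plan is to reduce Corollary~\ref{cor:UnderlyingWedges} to Proposition~\ref{prop:UnderlyingAcyclics} by observing that the hypothesis on $M$ (being a wedge of appropriate modules) is still strong enough to kill all non-trivial geometric fixed points. Once that is established, Proposition~\ref{prop:GeomFPAcyclics} immediately yields the stated description of $\ccZ_M^G$.

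Concretely, I would first record the fact that for any subgroup $H\subset G$, the geometric fixed points functor $\Phi^{H}$ preserves arbitrary wedges, since it is a left adjoint on the homotopy category (or more directly, since it commutes with the relevant smash products and colimits used to form it). Applying this to $M=\bigvee_i M_i$ gives
\[
\Phi^H M\simeq \bigvee_i \Phi^H M_i
\]
for every $H\subset G$.

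Next, I would invoke the proof of Proposition~\ref{prop:UnderlyingAcyclics}: the divisibility of $\bar{D}_i$ by some $N_{C_2}^{C_{2^n}} r^H_{j_H}$ for each $C_2\subset H\subset C_{2^n}$ forces $\Phi^H M_i\simeq \ast$ for every such $H$, via the invertibility statement from \cite[Proposition 5.50]{HHR}. Combined with the wedge decomposition above, this shows that $\Phi^H M\simeq\ast$ whenever $H$ is a non-trivial subgroup of $C_{2^n}$. In particular, for any $G$-spectrum $Z$ and any non-trivial $H$, the spectrum $\Phi^H(M\wedge Z)\simeq \Phi^H M\wedge\Phi^H Z$ is automatically contractible.

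Finally, Proposition~\ref{prop:GeomFPAcyclics} reduces the condition $Z\in\ccZ_M^G$ to checking $\Phi^H Z\in\ccZ_{\Phi^H M}$ for all $H\subset G$, and by the previous paragraph the only non-vacuous condition is the one at $H=\{e\}$, namely that $i_{\{e\}}^\ast Z\in\ccZ_{i_{\{e\}}^\ast M}$. This is precisely the claim. There is no serious obstacle here; the only point that requires any care is confirming that $\Phi^H$ commutes with the possibly infinite wedge used to form $M$, which is standard.
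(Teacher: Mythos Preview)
Your proposal is correct and is exactly the implicit argument the paper has in mind: the corollary is stated without proof immediately after Proposition~\ref{prop:UnderlyingAcyclics}, and your reduction via $\Phi^H$ preserving wedges together with Proposition~\ref{prop:GeomFPAcyclics} is the intended justification. There is nothing to add.
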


We deduce an immediate application to the Real Morava $K$-theories and Johnson-Wilson theories introduced by Hu-Kriz and studied extensively by Kitchloo-Wilson \cite{HuKrizANSS}, \cite{KW}.
\begin{corollary}
The equivariant Bousfield classes of $E_{\R}(n)$ and $K_\R(0)\vee\dots\vee K_\R(n)$ agree.
\end{corollary}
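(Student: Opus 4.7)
The plan is to reduce the equivariant statement to the classical non-equivariant Bousfield equivalence $\langle E(n)\rangle = \langle K(0)\vee K(1)\vee\dots\vee K(n)\rangle$ of Ravenel by applying Corollary~\ref{cor:UnderlyingWedges} to both sides. For $G = C_2$, the only subgroup $H$ with $C_2\subset H\subset G$ is $H = C_2$ itself, so the divisibility condition of Proposition~\ref{prop:UnderlyingAcyclics} collapses to requiring that $\bar{D}$ be divisible by a single Real generator $\bar{r}_j$.

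Next, I would display each summand as a module of the required form. The Real Johnson-Wilson spectrum $E_\R(n)$ and each Real Morava $K$-theory $K_\R(i)$ with $1\le i\le n$ are, by construction, modules over $\MUR$ with a Real lift of the chromatic class $v_n$ (respectively $v_i$) inverted, hence modules over $\MUR[\bar{r}_n^{-1}]$ (respectively $\MUR[\bar{r}_i^{-1}]$), and so fit the hypothesis of Proposition~\ref{prop:UnderlyingAcyclics}. The remaining summand $K_\R(0)\simeq \MUR\otimes\Q$ does not fit this module framework directly, but its $C_2$-geometric fixed points vanish: this follows from the classical identification $\Phi^{C_2}\MUR\simeq MO$ together with the fact that $MO$ is $2$-torsion, so that rationalization kills it. Proposition~\ref{prop:GeomFPAcyclics} then still gives
\[
\ccZ_{K_\R(0)}^{C_2} = \big(i_{\{e\}}^\ast\big)^{-1}\ccZ_{K(0)},
\]
and assembling with Corollary~\ref{cor:UnderlyingWedges} yields
\[
\ccZ_{E_\R(n)}^{C_2} = \big(i_{\{e\}}^\ast\big)^{-1}\ccZ_{E(n)} \quad \text{and} \quad \ccZ_{K_\R(0)\vee\dots\vee K_\R(n)}^{C_2} = \big(i_{\{e\}}^\ast\big)^{-1}\ccZ_{K(0)\vee\dots\vee K(n)}.
\]

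Applying Ravenel's classical Bousfield equivalence to the right-hand sides then gives equality of the two equivariant Bousfield classes. The main obstacle is handling the $K_\R(0)$ summand, which is constructed by rationalization rather than by inverting a Real generator; the cleanest workaround is to verify the vanishing of its $C_2$-geometric fixed points by hand and appeal to the more general Proposition~\ref{prop:GeomFPAcyclics} for that summand alone, leaving the other summands to the corollary.
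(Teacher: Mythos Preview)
Your approach is the same as the paper's: show that both equivariant Bousfield classes are determined by the underlying spectra via Proposition~\ref{prop:UnderlyingAcyclics} and Corollary~\ref{cor:UnderlyingWedges}, then invoke the classical equivalence $\langle E(n)\rangle=\langle K(0)\vee\dots\vee K(n)\rangle$. The paper simply applies Corollary~\ref{cor:UnderlyingWedges} to the entire wedge $K_\R(0)\vee\dots\vee K_\R(n)$ and Proposition~\ref{prop:UnderlyingAcyclics} to $E_\R(n)$, without singling out the height-zero summand.

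Your extra care with $K_\R(0)$ is reasonable, since that summand does not obviously sit as a module over $\MUR[\bar r_j^{-1}]$ for any $j\ge 1$. One correction, though: the identification $K_\R(0)\simeq \MUR\otimes\Q$ is not an equivalence (the underlying spectrum of the right-hand side is $MU_\Q$, not $H\Q$). What your argument actually needs, and all it uses, is that $\Phi^{C_2}K_\R(0)\simeq\ast$. That holds for any sensible model: if $K_\R(0)$ is built as a rational $\MUR$-module then $\Phi^{C_2}\MUR\simeq MO$ is $2$-torsion and dies rationally; if $K_\R(0)=H\underline{\Q}$ then $\Phi^{C_2}H\underline{\Z}\simeq H\F_2$ likewise rationalizes to zero. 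With that phrasing adjusted, your argument is correct and in fact slightly more scrupulous than the paper's at this one point.
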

\begin{proof}
By Corollary~\ref{cor:UnderlyingWedges}, the Bousfield class of $K_\R(0)\vee\dots\vee K_\R(n)$ is determined by the underlying spectrum:
\[
i_{e}^\ast\big(K_\R(0)\vee\dots\vee K_\R(n)\big)\simeq K(0)\vee\dots\vee K(n).
\]
Similarly, a direct application of Proposition~\ref{prop:UnderlyingAcyclics} shows that the Bousfield class of $E_\R(n)$ is also determined by the underlying spectrum, which is $E(n)$. The result is now classical.
\end{proof}

\subsection{Localizations of \texorpdfstring{$\cO$}{O}-algebras}
Since the smash product is associative, we know that $\ccZ_E^G$ is always a tensor ideal of $\Sp^G$. In particular, it is a non-unital symmetric monoidal subcategory of $\Sp^G$. This gives another way to interpret Proposition~\ref{prop:RestrictionInduction}.

\begin{proposition}
For any $G$-spectrum $E$, the assignment 
\[
G/H\mapsto \m{\ccZ}_E(G/H):=\ccZ_{i_H^\ast E}^H\subset\Sp^H
\]
defines a non-unital symmetric monoidal sub-coefficient system of $\m{\Sp}$.
\end{proposition}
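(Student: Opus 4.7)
The plan is to unpack the definition of a non-unital symmetric monoidal sub-coefficient system and check each piece, with everything ultimately reducing to previously established facts (most importantly Proposition~\ref{prop:RestrictionInduction}) and the fact that the various equivariant functors in sight are strong symmetric monoidal.

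First, I would set up the functoriality. The orbit category $\cOrb_G$ is generated by restriction maps $G/K\to G/H$ (for $K\subset H$) and conjugation isomorphisms $c_g\colon G/H\xrightarrow{\cong}G/(gHg^{-1})$. To define $\m{\ccZ}_E$ on morphisms I would take the restriction functor $i_K^H\colon\Sp^H\to\Sp^K$ (respectively the conjugation equivalence $c_g^\ast\colon\Sp^{gHg^{-1}}\to\Sp^H$) and observe that it carries the acyclic subcategory into the acyclic subcategory: applying the first inclusion of Proposition~\ref{prop:RestrictionInduction} with the ambient group replaced by $H$, the subgroup replaced by $K$, and the spectrum replaced by $i_H^\ast E$ yields
\[
i_K^H\ccZ_{i_H^\ast E}^H\subset\ccZ_{i_K^\ast i_H^\ast E}^K=\ccZ_{i_K^\ast E}^K,
\]
which is exactly what is needed; conjugation is handled by the analogous (and easier) observation that $c_g^\ast$ is a strong symmetric monoidal equivalence sending $i_{gHg^{-1}}^\ast E$ to $i_H^\ast E$. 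Standard 2-categorical coherence for equivariant restriction/conjugation then gives a well-defined contravariant functor to categories.

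Next, I would verify the non-unital symmetric monoidal enhancement on each fiber. For fixed $H$, the acyclic subcategory $\ccZ_{i_H^\ast E}^H$ is a tensor ideal of $\Sp^H$: if $Z_1,Z_2\in\ccZ_{i_H^\ast E}^H$, then associativity of $\wedge$ gives
\[
(Z_1\wedge Z_2)\wedge i_H^\ast E\simeq Z_1\wedge(Z_2\wedge i_H^\ast E)\simeq\ast,
\]
so $\ccZ_{i_H^\ast E}^H$ is closed under the smash product (it need not be unital since the $H$-sphere need not itself be $i_H^\ast E$-acyclic). Together with the symmetry and associativity isomorphisms inherited from $\Sp^H$, this gives a non-unital symmetric monoidal structure. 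To promote the coefficient-system morphisms to strong monoidal functors I would invoke that both $i_K^H$ and $c_g^\ast$ are already strong symmetric monoidal on the ambient categories of equivariant spectra, so their restrictions to the acyclic subcategories are as well.

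Finally, the inclusions $\ccZ_{i_H^\ast E}^H\hookrightarrow\Sp^H$ are fully faithful strong symmetric monoidal functors (non-unital on the left, unital on the right) and strictly compatible with restriction and conjugation, so they assemble into a natural transformation $\m{\ccZ}_E\hookrightarrow\m{\Sp}$ exhibiting $\m{\ccZ}_E$ as a sub-coefficient system. I do not anticipate a genuine obstacle here: the only place where the argument is not completely formal is the verification that restrictions preserve acyclics, and Proposition~\ref{prop:RestrictionInduction} already does that work; everything else is a matter of carefully packaging structure that the ambient coefficient system $\m{\Sp}$ already possesses.
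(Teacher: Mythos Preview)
Your proposal is correct and matches the paper's reasoning exactly. The paper does not give a separate proof of this proposition: it is stated as a reformulation of Proposition~\ref{prop:RestrictionInduction}, with the preceding sentence supplying the tensor-ideal observation, and your write-up simply unpacks those two ingredients in detail.
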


This reformulation allows us to most easily state the sufficient conditions for a localization to preserve $\cO$-algebra structures for an $\Ninfty$ operad $\cO$. The sufficiency of the following theorem was proved in \cite{HHLocalization}; the thesis of White built upon this in a more general context and also showed necessity as well \cite{WhiteThesis, White}.

\begin{theorem}[{\cite[Theorem 7.3]{HHLocalization}}, {\cite[Section 5]{White}}]\label{thm:LocalizationofOAlgebras}
Let $\cO$ be an $\Ninfty$ operad, and let $L$ be a Bousfield localization on $G$-spectra. 
If for every subgroup $H\subset G$ and for every admissible $T\in \m{\cO}(G/H)$ the category of acyclics is closed under $N^T$ then $L$ preserves $\cO$-algebras.
\end{theorem}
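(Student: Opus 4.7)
The plan is to transfer the Bousfield localization $L$ from $\Sp^{G}$ to the category of $\cO$-algebras, producing a localization whose underlying spectrum agrees with $L$. By the standard transfer argument for model structures on algebras over an operad, this reduces to a single cellular statement: if $R$ is an $\cO$-algebra and $j\colon X\to Y$ is a generating trivial cofibration for $L$ in $\Sp^{G}$ with $L$-acyclic cofiber $Z$, then the operadic pushout
\[
R\longrightarrow R':=R\amalg_{\mathbb{P}_{\cO}X}\mathbb{P}_{\cO}Y
\]
has $L$-acyclic underlying cofiber.

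To verify this, I would invoke the equivariant cellular filtration of the operadic pushout, in the form worked out for $\Ninfty$ operads in \cite{BHNinftyCat} and \cite{HHLocalization}. Its associated graded pieces are of the form $R$ smashed with an enveloping layer built from $\cO_{n}$ and the pair $(Y,X)$, and the $G\times\Sigma_{n}$-equivariant structure on $\cO_{n}$ sorts these layers by isotropy type: an orbit whose isotropy is the graph $\Gamma_{T}$ of a homomorphism $H\to\Sigma_{n}$ classifying an $H$-set $T$ contributes a summand equivalent (by Lemma~\ref{lem:Diagonal}) to $R\wedge N^{T}Z$. Since admissibility of $T$ is defined by the condition $\cO_{n}^{\Gamma_{T}}\simeq \ast$, only admissible norms contribute layers up to equivalence.

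From there the conclusion is immediate. By hypothesis each admissible $N^{T}Z$ lies in the category of $L$-acyclics; since the acyclics form a tensor ideal closed under transfinite composition of cofibrations, the underlying map $R\to R'$ has $L$-acyclic cofiber. This is exactly what is needed to lift $L$ to a Bousfield localization on $\cO$-algebras, so in particular $L$ preserves $\cO$-algebras. The principal obstacle is the second step: identifying the filtration layers as $R\wedge N^{T}Z$. This rests on Lemma~\ref{lem:Diagonal} together with a careful orbit analysis of the $G\times\Sigma_{n}$-action on $\cO_{n}$, which is precisely where the combinatorics of the indexing system $\m{\cO}$ enters the argument.
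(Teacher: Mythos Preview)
The paper does not prove this theorem; it is stated with citations to \cite{HHLocalization} and \cite{White} and used as a black box. Your outline is essentially the argument given in those references: lift $L$ to $\cO$-algebras by checking that pushouts of free $\cO$-algebra maps along generating $L$-acyclic cofibrations remain $L$-equivalences, and do this by filtering the operadic pushout so that the associated graded layers are built from norms $N^{T}Z$ indexed by admissible $H$-sets.

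One correction: the identification of the filtration layers with pieces of the form $G_{+}\wedge_{H} N^{T}(i_{H}^{\ast}Z)$ does not use Lemma~\ref{lem:Diagonal}. That lemma computes geometric fixed points of a norm; what you need here is the more elementary equivalence
\[
\big(G\times\Sigma_{n}\big)/\Gamma_{T}\,{}_{+}\wedge_{\Sigma_{n}} Z^{\wedge n}\ \simeq\ G_{+}\wedge_{H} Z^{\wedge T}\ \simeq\ G_{+}\wedge_{H} N^{T}(i_{H}^{\ast}Z),
\]
which comes from unraveling the indexed smash product over the $H$-set $T$ (this is the content of the orbit analysis you allude to, but it is not the diagonal formula). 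Once the layers are so identified, closure of the acyclics under induction (Proposition~\ref{prop:RestrictionInduction}) together with the hypothesis on $N^{T}$ finishes the argument as you say.
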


In this paper, we are also concerned with finite localizations (which are always known to be smashing). Here, the same result holds; the proof is identical.

\begin{theorem}\label{thm:FiniteLocalizationofOAlgebras}
Let $\cO$ be an $\Ninfty$ operad, and let $\cc{V}$ be a thick subcategory of $\Sp^G$. If for every $H\subset G$ and for every admissible $T\in\m{\cO}(G/H)$, $N^T$ restricts to an endofunctor of $\cc{V}$, then the finite localization $L_{\cc{V}}^f$ preserves $\cO$-algebras.
\end{theorem}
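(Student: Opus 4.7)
The plan is to reduce the statement to Theorem~\ref{thm:LocalizationofOAlgebras}. Since finite localizations are smashing, $L_{\cc{V}}^f$ is itself a Bousfield localization, namely $L_E$ for $E := L_{\cc{V}}^f S^0$. Under this identification the category $\ccZ_E^G$ of $E$-acyclics coincides with the localizing subcategory $\langle \cc{V}\rangle$ generated by $\cc{V}$ in $\Sp^G$: smashing with $E$ computes the localization, so $Z$ is $E$-acyclic precisely when $L_{\cc{V}}^f Z \simeq \ast$, which characterizes membership in $\langle\cc{V}\rangle$. It therefore suffices to verify the hypothesis of Theorem~\ref{thm:LocalizationofOAlgebras} for $L_E$: that $N^T$ restricts to an endofunctor of $\langle\cc{V}\rangle$ for every admissible $T\in\m{\cO}(G/H)$.

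To propagate the hypothesized $N^T$-closure from $\cc{V}$ to $\langle\cc{V}\rangle$, I would argue by transfinite induction along the cellular construction of the localizing hull from $\cc{V}$ under retracts, arbitrary coproducts, and cofiber sequences. Functoriality handles retracts. For cofiber sequences the main tool is the \emph{norm filtration}: for any cofiber sequence $X\to Y\to Z$ and finite $G$-set $T$, the spectrum $N^T Y$ carries a natural filtration whose associated graded pieces are smash products of terms of the form $N^{T'}X \wedge N^{T''}Z$ indexed by decompositions $T = T' \amalg T''$, mirroring the equivariant binomial expansion exploited throughout \cite[Appendix A]{HHR}. Two facts ensure these pieces remain in $\langle\cc{V}\rangle$: first, every indexing system is closed under subobjects, so $T'$ and $T''$ are themselves admissible and hence $\cc{V}$ is closed under $N^{T'}$ and $N^{T''}$; second, $\langle\cc{V}\rangle$ inherits from $\cc{V}$ the property of being a thick $\otimes$-ideal, so smash products of its objects remain inside it. A similar but simpler expansion, obtained by applying the norm to a trivial cofiber sequence $0 \to X\vee Y \to X\vee Y$ viewed two ways, handles coproducts.

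Once $\langle\cc{V}\rangle$ is known to be $N^T$-closed for every admissible $T$, a direct appeal to Theorem~\ref{thm:LocalizationofOAlgebras} yields that $L_E = L_{\cc{V}}^f$ preserves $\cO$-algebras. The main obstacle is the middle step: although the author indicates that the proof is identical to the Bousfield case, the combinatorial bookkeeping needed to track admissibility through the norm filtration---and to confirm that the non-additive functor $N^T$ genuinely respects the inductive construction of $\langle\cc{V}\rangle$---is the technical heart of the argument and must be executed with care.
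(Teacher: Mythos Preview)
Your reduction to Theorem~\ref{thm:LocalizationofOAlgebras} is sound and is essentially what the paper has in mind: the paper offers no argument beyond the sentence ``the proof is identical,'' and the propagation of $N^{T}$-closure from $\cc{V}$ to its localizing hull $\langle\cc{V}\rangle$ that you spell out is the same mechanism the paper invokes in the proposition immediately following (propagating from a generator $E$ to the thick subcategory it generates), just run one rung higher on the ladder $\{E\}\subset\cc{V}\subset\langle\cc{V}\rangle$. So the approaches agree; you have simply made explicit the step the paper elides.

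One point needs tightening. Your treatment of coproducts (``a trivial cofiber sequence $0\to X\vee Y\to X\vee Y$'') only handles binary, hence finite, wedges. To close $\langle\cc{V}\rangle$ under $N^{T}$ you must also handle \emph{arbitrary} coproducts, and for this the cofiber-filtration trick is not enough: you need that the norm functors commute with filtered (equivalently, sifted) colimits, so that $N^{T}\big(\bigvee_{i} X_{i}\big)$ is the filtered colimit of $N^{T}$ applied to the finite subwedges. This is precisely the ingredient the paper cites in the proof of that same proposition (``the norms commute with sifted colimits, and there is a formula for describing the norm of a cofiber in terms of the norms of the pieces''). Once you insert it, your transfinite induction goes through at limit stages and the argument is complete. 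A second, smaller point: in the cofiber step, the filtration naturally expresses $N^{T}Y$ in terms of pieces $N^{T'}X\wedge N^{T''}Z$, so extracting $N^{T}Z\in\langle\cc{V}\rangle$ from $X,Y\in\cc{W}$ requires an auxiliary induction on $|T|$; you gesture at this but it is worth making explicit.
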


\begin{proposition}
If $\cc{V}$ is the thick subcategory generated by an object $E$, then the conditions of Theorem~\ref{thm:FiniteLocalizationofOAlgebras} are met provided $N^T(E)\in\cc{V}$ for all admissible $T$.
\end{proposition}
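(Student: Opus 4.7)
The plan is to introduce the auxiliary subcategory
\[
\cc{W} := \{X \in \Sp^G : N^T(X) \in \cc{V} \text{ for every } H \subset G \text{ and every admissible } T \in \m{\cO}(G/H)\}
\]
and to show that $\cc{W}$ is itself a thick subcategory of $\Sp^G$. Once this is established, the hypothesis says $E \in \cc{W}$, and since $\cc{V}$ is by definition the smallest thick subcategory containing $E$, we obtain $\cc{V} \subset \cc{W}$, which is precisely the hypothesis of Theorem~\ref{thm:FiniteLocalizationofOAlgebras}. The whole problem therefore reduces to verifying thickness of $\cc{W}$.

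Closure of $\cc{W}$ under retracts is immediate from the functoriality of $N^T$. For closure under shifts, the strong monoidality of the norm together with Lemma~\ref{lem:Diagonal} shows that $N^T(\Sigma X)$ and $N^T(X)$ differ only by smashing with an invertible representation sphere; since $\cc{V}$, as a thick subcategory of $\Sp^G$ suitable for a finite localization (hence a thick tensor ideal), is closed under this auto-equivalence, shifts of elements of $\cc{W}$ remain in $\cc{W}$.

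The main obstacle, and the real content of the argument, is closure under cofiber sequences. Given a cofiber sequence $A \to Y \to Z$ with $A, Z \in \cc{W}$, the norm $N^T(Y)$ carries the standard finite filtration---the same filtration underlying the proofs of Theorems~\ref{thm:LocalizationofOAlgebras} and~\ref{thm:FiniteLocalizationofOAlgebras}---whose associated graded consists of the two extreme pieces $N^T(A)$ and $N^T(Z)$ together with intermediate pieces of the form $G_+ \wedge_K \bigl(\bigwedge_i i_K^\ast A \wedge \bigwedge_j i_K^\ast Z\bigr)$, indexed by $G$-orbits of subsets of the underlying set of $T$ with $K$ the corresponding stabilizer. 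The extreme pieces lie in $\cc{V}$ by the assumption $A, Z \in \cc{W}$. For each intermediate piece, the projection formula $G_+ \wedge_K (i_K^\ast A \wedge B) \simeq A \wedge (G_+ \wedge_K B)$ extracts a factor of $A$ from the induction, and the tensor-ideal property of $\cc{V}$ with $A \in \cc{V}$ then places the piece in $\cc{V}$. Inducting up the filtration through its cofiber sequences shows $N^T(Y) \in \cc{V}$, so $Y \in \cc{W}$; this completes the proof that $\cc{W}$ is thick, and with it the proposition.
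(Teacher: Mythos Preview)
Your overall strategy---defining $\cc{W}$ and proving it thick via the filtration on the norm of a cofiber---is exactly the reduction the paper has in mind; the paper's proof is simply a pointer to \cite[Proposition~B.170]{HHR} together with the one-line summary you have unpacked.

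There is, however, a genuine gap in your treatment of the intermediate filtration quotients. For an admissible $T$ and a cofiber sequence $A\to Y\to Z$, the associated graded pieces of $N^{T}(Y)$ are indexed by orbits of subsets $S\subset T$, but the summand attached to $S$ with stabilizer $K$ is \emph{not} of the form $G_{+}\wedge_{K}\bigl(\bigwedge i_{K}^{\ast}A\wedge\bigwedge i_{K}^{\ast}Z\bigr)$. Rather, $K$ acts on $S$ and on $T\setminus S$ with several orbits, and the piece is
\[
G_{+}\wedge_{K}\Bigl(\,\bigwedge_{\alpha} N_{K_{\alpha}}^{K} i_{K_{\alpha}}^{\ast}Z \;\wedge\; \bigwedge_{\beta} N_{K_{\beta}}^{K} i_{K_{\beta}}^{\ast}A\,\Bigr),
\]
where the $K_{\alpha},K_{\beta}\subset K$ are the stabilizers of the various $K$-orbits. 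In general none of these inner norms is $i_{K}^{\ast}A$, so the Frobenius relation does not peel off a copy of the $G$-spectrum $A$, and your appeal to ``$A\in\cc{V}$ plus tensor-ideal'' does not go through as written. (Take $G=C_{4}$, $T=C_{4}/e$, and $S=\{0,2\}$: the relevant factor is $N_{e}^{C_{2}}i_{e}^{\ast}A$, not $i_{C_{2}}^{\ast}A$.)

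The repair is to use the full strength of your hypothesis $A\in\cc{W}$ rather than only $A\in\cc{V}$. Each $K$-set $K/K_{\beta}$ occurring above is a subobject of $i_{K}^{\ast}T$, hence admissible; thus $A\in\cc{W}$ forces $N_{K_{\beta}}^{K}i_{K_{\beta}}^{\ast}A$ to lie in the $K$-level analogue of $\cc{V}$, the smash product then lies there because that analogue is again a tensor ideal, and finally $G_{+}\wedge_{K}(-)$ carries it into $\cc{V}$ by Frobenius. Equivalently, one runs your argument by induction on $|T|$, noting that for a proper nonempty $S$ every $K$-orbit in $T$ is strictly smaller than $T$. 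With this correction the proof is complete and matches the paper's.
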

\begin{proof}
This is essentially \cite[Proposition B.170]{HHR}. In short, the norms commute with sifted colimits, and there is a formula for describing the norm of a cofiber in terms of the norms of the pieces. This reduces checking for a general object in the thick subcategory to checking for the generator.
\end{proof}

Since categories of acyclics are always non-unital symmetric monoidal subcategories, and since the equivariant thick subcategories are tensor ideals, we conclude that these localizations always preserve at least the trivial $E_\infty$-structure.

\begin{corollary}
If $L$ is any Bousfield or finite localization on $G$-spectra, then $L$ preserves trivial $E_\infty$ algebras.
\end{corollary}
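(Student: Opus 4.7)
The plan is to apply Theorems~\ref{thm:LocalizationofOAlgebras} and~\ref{thm:FiniteLocalizationofOAlgebras} once we identify what their hypotheses demand when $\cO$ is a trivial $E_\infty$ operad. The key point is that such an operad corresponds to the initial indexing system $\m{\cO}^{tr}$, so the admissible sets $T \in \m{\cO}^{tr}(G/H)$ are exactly the $H$-sets with trivial $H$-action---finite disjoint unions $H/H \amalg \cdots \amalg H/H$ of trivial orbits. For such a $T$ with $n$ summands, the defining formula for $N^T$ collapses to the $n$-fold smash product $E^{\wedge n}$, because $N^{H/H}(E) = N_H^H i_H^\ast E = E$.

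With this reduction, the hypothesis of Theorem~\ref{thm:LocalizationofOAlgebras} becomes the statement that $\m{\ccZ}_E(G/H) = \ccZ_{i_H^\ast E}^H$ is closed under iterated smash powers of its own objects, and the hypothesis of Theorem~\ref{thm:FiniteLocalizationofOAlgebras} becomes the analogous closure for $\cc{V}$. Both are immediate from the tensor-ideal property: by associativity of the smash product, $\ccZ_{i_H^\ast E}^H$ is a tensor ideal of $\Sp^H$ (smashing an acyclic with anything yields an acyclic), and the equivariant thick subcategories under consideration are tensor ideals, as recorded in the paragraph preceding the corollary. In either case, $Z^{\wedge n}$ lies in the given subcategory whenever $Z$ does.

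There is no substantive obstacle here; the only conceptual content is recognizing that \emph{admissible for the trivial operad} means \emph{trivial}, whereupon $N^T$ degenerates into iterated smashing and both halves of the corollary follow at once from the two preceding theorems.
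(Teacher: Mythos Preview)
Your proposal is correct and follows essentially the same approach as the paper: the paper states the argument in the sentence immediately preceding the corollary, observing that the acyclics form a non-unital symmetric monoidal subcategory and that the thick subcategories are tensor ideals, so the hypotheses of Theorems~\ref{thm:LocalizationofOAlgebras} and~\ref{thm:FiniteLocalizationofOAlgebras} are met for the trivial indexing system. You have simply unpacked this more explicitly by spelling out that the admissible $T$ are trivial $H$-sets and that $N^T$ therefore reduces to an iterated smash power.
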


Although this is less structured than we might like, it is enough structure to guarantee good, symmetric monoidal category of modules.

\begin{corollary}[{\cite[Theorem 1.1]{BHNinftyCat}}]
Let $\cO$ be an $\Ninfty$ operad. If $R$ is an $\cO$-algebra in $G$-spectra, and if $L$ is any Bousfield or finite localization, then there is a symmetric monoidal category of $L(R)$-modules.
\end{corollary}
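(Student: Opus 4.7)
The plan is to chain together the two ingredients already assembled in the preceding discussion: the initiality of the trivial $E_\infty$ operad among $\Ninfty$ operads, and the theorem of Blumberg--Hill that a trivial $E_\infty$ structure on a ring spectrum already suffices to endow its module category with a symmetric monoidal product. The statement is really a tautological packaging of these facts given the corollary immediately above, so the proof should be quite short.

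First I would observe that since the indexing system $\m{\cO}^{tr}$ of sets with trivial action is initial in the poset of indexing systems, and since the correspondence between $\Ninfty$ operads and indexing systems is essentially surjective, there is a map of operads from a trivial $E_\infty$ operad to $\cO$. Hence any $\cO$-algebra $R$ is, by restriction of structure, a trivial $E_\infty$ algebra in $G$-spectra. Next I would invoke the immediately preceding corollary: for any Bousfield or finite localization $L$ on $\Sp^G$, the functor $L$ preserves trivial $E_\infty$ algebras. Applying this to $R$ viewed as a trivial $E_\infty$ algebra shows that $L(R)$ canonically inherits a trivial $E_\infty$ algebra structure.

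Finally I would apply \cite[Theorem 1.1]{BHNinftyCat}, which says precisely that algebras over a trivial $E_\infty$ operad in $G$-spectra admit a symmetric monoidal category of modules. Instantiating this with the trivial $E_\infty$ structure on $L(R)$ produced in the previous step yields the desired symmetric monoidal category of $L(R)$-modules.

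There is essentially no obstacle: the real content lives in the two cited results, and the corollary just assembles them. If there is anything subtle to verify, it is only that the trivial $E_\infty$ structure on $L(R)$ provided by the preceding corollary is the one obtained by pulling back the $\cO$-algebra structure on $R$ along the map from a trivial $E_\infty$ operad, so that the hypothesis of \cite[Theorem 1.1]{BHNinftyCat} is genuinely met; this is immediate from naturality of $L$ in the algebra structure maps.
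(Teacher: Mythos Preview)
Your proposal is correct and matches the paper's intended reasoning exactly. The paper does not write out a proof for this corollary; it simply places it immediately after the statement that any Bousfield or finite localization preserves trivial $E_\infty$ algebras and cites \cite[Theorem 1.1]{BHNinftyCat}, so the two-step argument you give (restrict $R$ to a trivial $E_\infty$ algebra via initiality, apply the preceding corollary to $L(R)$, then invoke the cited theorem) is precisely the implied proof.
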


The richer structure in a general $\cO$-algebra translates to a richer structure on the category of modules for an $\cO$-algebra $R$.

\begin{corollary}[{\cite[Section 5.2]{BHNinftyCat}}]
Let $\cO$ be an $\Ninfty$ operad of the homotopy type of the linear isometries operad for a $G$-universe $U$. If $R$ is an $\cO$-algebra in $G$-spectra, and if $L$ is any Bousfield or finite localization which preserves $\cO$-algebras, then there is an $\m{\cO}$-symmetric monoidal category of $L(R)$-modules.
\end{corollary}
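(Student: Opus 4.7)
The plan is to reduce this to \cite[Section 5.2]{BHNinftyCat}, which constructs an $\m{\cO}$-symmetric monoidal category of modules over any $\cO$-algebra when $\cO$ has the homotopy type of a linear isometries operad. The only localization-specific content needed is that $L(R)$ continues to be an $\cO$-algebra, and this is immediate from the hypothesis that $L$ preserves $\cO$-algebras.

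First I would invoke the hypothesis to conclude that $L(R)$ is an $\cO$-algebra in $G$-spectra. The question then becomes purely a statement about an $\cO$-algebra $S$, independent of the provenance of $S$. The $\m{\cO}$-symmetric monoidal structure on $S$-modules is assembled from the operadic norm maps $N^T i_H^\ast S \to i_H^\ast S$ for admissible $T \in \m{\cO}(G/H)$: each such map gives rise to a norm functor $M\mapsto i_H^\ast S \smashove{N^T i_H^\ast S} N^T M$ on $i_H^\ast S$-modules, and these norms, together with the restriction functors along subgroup inclusions and the usual smash product, form the required $\m{\cO}$-symmetric monoidal structure. Applying this with $S = L(R)$ yields the desired conclusion.

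The main obstacle, if one were to reprove \cite[Section 5.2]{BHNinftyCat} rather than cite it, would be the coherence of iterated norms: one must verify that the norms for composite admissible maps agree up to coherent homotopy with iterated applications of norms, and that they interact correctly with the ambient symmetric monoidal structure. This is where the linear isometries hypothesis is essential, since the untwisting isomorphisms built into the action of a linear isometries operad supply precisely these coherences. As the work has been done there for arbitrary $\cO$-algebras, specializing to $S = L(R)$ requires no further argument.
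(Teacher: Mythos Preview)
Your proposal is correct and matches the paper's approach: the corollary is stated with a bare citation to \cite[Section 5.2]{BHNinftyCat} and no further argument, so the only content beyond that citation is the observation that $L(R)$ is again an $\cO$-algebra by hypothesis, which is exactly what you identify.
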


\begin{remark}
There is also a very exciting $\infty$-categorical approach to the norm functors and various kinds of $G$-symmetric monoidal enhancements which arise in equivariant homotopy theory due to ongoing work of Barwick, Dotto, Glasman, Nardin, and Shah \cite{Bourbon2}. This will elegantly remove the ``linear isometries'' hypothesis, giving $\m{\cO}$-symmetric monoidal categories of  modules over any $\cO$-algebra.
\end{remark}

\subsection{Pushfowards and localization}
There is an interesting family of localizations which always preserves all of the desired multiplicative structure: localizations with respect to an ordinary ring spectrum viewed as a $G$-spectrum with a trivial action. We begin with a classical observation.

\begin{proposition}
If $R\to S$ is a map of ring spectra, then
\[
\ccZ_{R}\subset \ccZ_{S}\subset \Sp.
\]
\end{proposition}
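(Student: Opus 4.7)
The plan is to exhibit $S$ as a retract of $R\wedge S$ using the ring map $f\colon R\to S$ and then smash with $Z$. Concretely, let $\eta_R\colon S^0\to R$ and $\eta_S\colon S^0\to S$ be the units and $\mu_S\colon S\wedge S\to S$ the multiplication on $S$. The composite
\[
S\simeq S^0\wedge S\xrightarrow{\eta_R\wedge S} R\wedge S\xrightarrow{f\wedge S} S\wedge S\xrightarrow{\mu_S} S
\]
equals $\mu_S\circ(\eta_S\wedge S)$ because $f\circ\eta_R=\eta_S$ by the assumption that $f$ is a map of ring spectra, and this latter composite is the identity on $S$ by unitality.

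Smashing this retraction with $Z$ shows that $S\wedge Z$ is a retract of $R\wedge S\wedge Z$. If $Z\in\ccZ_R$, then $R\wedge Z\simeq\ast$, so $R\wedge S\wedge Z\simeq S\wedge(R\wedge Z)\simeq\ast$, and hence $S\wedge Z\simeq\ast$ as a retract of a contractible object. This establishes $\ccZ_R\subset\ccZ_S$, and the inclusion $\ccZ_S\subset\Sp$ is tautological.

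There is really no obstacle here; the only thing to be slightly careful about is what class of ``ring spectrum'' one is working with, so that the retraction argument makes homotopical sense. For associative ring spectra in any of the standard symmetric monoidal model categories of spectra this is immediate, and the argument only uses the unit and the module structure of $S$ over $R$ induced by $f$, so it applies equally well to $A_\infty$ or $E_\infty$ maps.
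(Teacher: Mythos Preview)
Your argument is correct and is exactly the standard retraction argument for this classical fact. The paper does not actually supply a proof of this proposition; it is introduced with the phrase ``We begin with a classical observation'' and stated without proof, serving only as input to the following proposition on Bousfield classes of fixed-point spectra. Your write-up would fit naturally as the omitted justification.
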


\begin{proposition}\label{prop:BousfieldClasses}
If $E$ is a ring spectrum, then for any sub-conjugate $K\subset H$, we have
\[
\ccZ_{E^{H}}\subset \ccZ_{E^{K}}\subset\Sp.
\]
\end{proposition}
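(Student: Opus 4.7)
The plan is to reduce the statement to the preceding proposition by exhibiting, for any $K$ sub-conjugate to $H$ in $G$, a map of ring spectra $E^{H}\to E^{K}$. Given such a map, the inclusion of acyclics is immediate from the fact that smashing with $E^H$ factors through smashing with $E^K$, which is precisely the content of the preceding proposition.

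First I would handle the easier case where $K\subset H$ is a genuine subgroup inclusion. Categorical fixed points form a lax symmetric monoidal functor on $G$-spectra, so both $E^{H}$ and $E^{K}$ inherit ring spectrum structures from $E$, and the canonical restriction $E^{H}\to E^{K}$ associated to the inclusion $K\subset H$ is a map of ring spectra. The preceding proposition then yields $\ccZ_{E^{H}}\subset\ccZ_{E^{K}}$.

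Next I would promote this to the sub-conjugate case. Suppose $g^{-1}Kg\subset H$ for some $g\in G$. The $G$-action on the (equivariant) ring spectrum $E$ is by ring maps, so conjugation by $g$ induces an equivalence of ring spectra $c_{g}\colon E^{g^{-1}Kg}\xrightarrow{\sim} E^{K}$. Precomposing $c_{g}$ with the restriction map $E^{H}\to E^{g^{-1}Kg}$ from the previous paragraph produces a map of ring spectra $E^{H}\to E^{K}$, and the preceding proposition again delivers the desired inclusion of acyclics.

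The only mildly subtle point is verifying that the conjugation-then-restriction composite really is a map of ring spectra, but this is formal: $G$ acts on $E$ by ring maps, and the lax symmetric monoidal structure on categorical fixed points is natural with respect to $G$-equivariant ring maps. Thus there is no genuine obstacle, and the proof is essentially a bookkeeping reduction to the one-step result about maps of ring spectra already proved.
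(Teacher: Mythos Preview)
Your proposal is correct and follows the same strategy as the paper: construct a map of ring spectra $E^{H}\to E^{K}$ and invoke the preceding proposition. The paper produces this map via the unit $E\to\Map_{H}(G_{+},i_{H}^{\ast}E)$ and then applies fixed points (only writing the case $E^{G}\to E^{H}$ explicitly), whereas you invoke the restriction map and the lax monoidal structure on fixed points directly; these are the same map, and your treatment of the sub-conjugate case via conjugation is more explicit than the paper's.
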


\begin{proof}
For any subgroup $H$, there is a map of equivariant ring spectra 
\[
E\to \Map_{H}(G_{+},i_{H}^{\ast}E).
\] 
Applying fixed points gives a map of ring spectra $E^{G}\to E^{H}$, giving the result.
\end{proof}

\begin{corollary}
The assignment of $G/H$ to the Bousfield class of $E^{H}$ is a contravariant functor from the orbit category to the Bousfield lattice of spectra.
\end{corollary}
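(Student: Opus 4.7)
The plan is to unpack morphisms in the orbit category and then invoke Proposition~\ref{prop:BousfieldClasses} after reducing to a genuine subgroup inclusion. A $G$-equivariant map $f\colon G/H\to G/K$ is determined by its value $f(eH) = gK$, and well-definedness forces $g^{-1}Hg\subset K$; thus every morphism in the orbit category is witnessed by a subconjugacy relation which becomes a genuine subgroup inclusion after conjugation.

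First I would verify that $G/H \mapsto \langle E^H\rangle$ is well-defined on orbits, in the sense that it depends only on the conjugacy class of $H$. For any $\gamma\in G$, the $G$-sets $G/H$ and $G/(\gamma H\gamma^{-1})$ are isomorphic, so the fixed point spectra $E^H$ and $E^{\gamma H\gamma^{-1}}$ are equivalent as spectra, and in particular $\ccZ_{E^H} = \ccZ_{E^{\gamma H\gamma^{-1}}}$.

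Given $f\colon G/H\to G/K$ with $g^{-1}Hg\subset K$, Proposition~\ref{prop:BousfieldClasses} applied to this honest subgroup inclusion gives $\ccZ_{E^K}\subset \ccZ_{E^{g^{-1}Hg}}$, which equals $\ccZ_{E^H}$ by the preceding paragraph. This inclusion of acyclics is the required comparison of Bousfield classes corresponding to the contravariant direction of $f$ in the target lattice. The identity and composition axioms of a functor are then automatic because the Bousfield lattice is a poset, having at most one morphism between any two objects. The main subtlety is thus the well-definedness check on conjugacy classes; once that is in place, the corollary is a direct application of Proposition~\ref{prop:BousfieldClasses}.
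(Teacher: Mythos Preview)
Your proposal is correct and is essentially the argument the paper has in mind: the corollary is stated without proof there, as it is meant to follow directly from Proposition~\ref{prop:BousfieldClasses} (which is already phrased for sub-conjugate inclusions) together with the fact that the Bousfield lattice is a poset. Your explicit treatment of conjugacy and of well-definedness on orbits simply spells out what the paper leaves implicit.
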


The fixed point functor is a categorical right adjoint, with left adjoint the ``pushfoward''.
\begin{definition}
Let $i_\ast$ denote the push-forward functor
\[
i_\ast\colon \Sp\to \Sp^G,
\]
which is the left-adjoint to the $G$-fixed points functor.
\end{definition}

While in general it is very difficult to determine the fixed points of a smash product, when one of the factors is in the image of the pushforward, we can readily do so. 
In particular, we can simply move the fixed points past the smash product in this case.
\begin{proposition}[{\cite{HuKrizHomologyMUR}}]\label{prop:FixedPoints}
If $K$ is a spectrum and $E$ is a $G$-spectrum, then we have a natural equivalence of spectra
\[
(K\wedge E)^G\simeq K\wedge E^G.
\]
In particular,
\[
(i_\ast K)^G\simeq K\wedge (S^0)^G.
\]
\end{proposition}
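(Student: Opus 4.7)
The plan is to construct a natural comparison map
\[
\mu_K\colon K\wedge E^G\longrightarrow (i_\ast K\wedge E)^G
\]
and verify it is an equivalence by reducing to the case $K=S^0$. The map $\mu_K$ is assembled as the composite
\[
K\wedge E^G\xrightarrow{\eta_K\wedge\mathrm{id}}(i_\ast K)^G\wedge E^G\longrightarrow (i_\ast K\wedge E)^G,
\]
where $\eta_K$ is the unit of the adjunction $i_\ast\dashv(-)^G$ and the second arrow is the lax symmetric monoidal structure map for $(-)^G$; this lax structure exists because the left adjoint $i_\ast$ is strong symmetric monoidal.

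Both source and target of $\mu_K$ are exact, filtered-colimit preserving functors of $K$: the source by stability of $\Sp$; the target because $i_\ast$ is a left adjoint, smashing with $E$ is exact, and the categorical fixed-point functor on genuine orthogonal $G$-spectra preserves cofiber sequences (being exact as a right Quillen functor between stable model categories) and filtered colimits (which are computed level-wise). Hence it suffices to check $\mu_{S^0}$. Now $i_\ast S^0$ represents $\pi_0\bigl((-)^G\bigr)$ by the adjunction, and hence by Yoneda coincides with the equivariant sphere spectrum $\mathbb{S}_G$, which is the unit for $\wedge$ on $\Sp^G$. Thus $i_\ast S^0\wedge E\simeq E$, and $\mu_{S^0}$ unwinds via the triangle identities for the adjunction to the canonical unit isomorphism $S^0\wedge E^G\simeq E^G$.

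Cell induction and passage to filtered colimits propagate the equivalence from $S^0$ to all $K\in\Sp$, yielding the first claim. The ``in particular'' assertion is the specialization $E=i_\ast S^0=\mathbb{S}_G$: the left-hand side collapses to $(i_\ast K)^G$ since $\mathbb{S}_G$ is the smash-unit, and the right-hand side reads $K\wedge \mathbb{S}_G^G$, which matches the displayed formula after interpreting $(S^0)^G$ as the categorical fixed points of the equivariant sphere. The main technical input beyond the formal manipulation is the exactness of $(-)^G$ together with the Yoneda identification $i_\ast S^0\simeq\mathbb{S}_G$; both are standard features of the orthogonal $G$-spectra model, and this is really the content that drives the projection formula.
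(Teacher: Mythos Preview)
The paper does not supply its own proof of this proposition; it is stated with a citation to Hu--Kriz. Your argument is correct and is the standard projection-formula proof: build the comparison map from the lax monoidal structure of $(-)^G$, note that both source and target are exact and filtered-colimit preserving in $K$, and reduce to $K=S^0$, where the identification $i_\ast S^0\simeq\mathbb{S}_G$ together with the unit coherence axiom for a lax monoidal functor finishes the job.

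Two small remarks on presentation. First, the lax monoidal structure on $(-)^G$ does not actually require knowing that $i_\ast$ is strong monoidal; it is available directly via the natural map $E^G\wedge F^G\to(E\wedge F)^G$, so your parenthetical justification is unnecessary (and arguably puts the cart before the horse). Second, the claim that $(-)^G$ preserves filtered homotopy colimits is most cleanly justified by observing that its left adjoint $i_\ast$ sends the compact generator $S^0$ to the compact object $\mathbb{S}_G$ and hence preserves compact objects; your ``computed levelwise'' remark is true on the point-set level but needs a word about cofibrancy to upgrade to a statement about homotopy colimits. Neither point affects the validity of the argument.
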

\begin{corollary}
For any subgroup $H$ of $G$, for any ordinary spectrum $K$, and for any $G$-spectrum $E$, 
\[
\Phi^H(i_\ast K\wedge E)\simeq K\wedge \Phi^H(E).
\]
\end{corollary}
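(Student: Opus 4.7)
The plan is to reduce the claim, via the strong symmetric monoidality of $\Phi^H$, to the computation of $\Phi^H$ on the pushforward $i_\ast K$ alone. Since $\Phi^H$ is strong symmetric monoidal on $G$-spectra, one has
\[
\Phi^H(i_\ast K \wedge E) \simeq \Phi^H(i_\ast K) \wedge \Phi^H(E),
\]
so it suffices to prove that $\Phi^H(i_\ast K) \simeq K$ for every ordinary spectrum $K$.

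For this, I would use the standard presentation of geometric fixed points as a categorical fixed point of a smash product with an associated universal space. Let $\mathcal{F}$ be the family of proper subgroups of $H$ and let $\widetilde{E\mathcal{F}}$ denote the cofiber of $E\mathcal{F}_+ \to S^0$. Then $\Phi^H(X) \simeq (\widetilde{E\mathcal{F}} \wedge X)^H$ for every $G$-spectrum $X$, so
\[
\Phi^H(i_\ast K) \simeq \bigl(i_\ast K \wedge \widetilde{E\mathcal{F}}\bigr)^H.
\]
Applying Proposition~\ref{prop:FixedPoints} at the subgroup $H$ (in place of $G$) pulls the factor $K$ outside the categorical fixed point, giving
\[
\bigl(i_\ast K \wedge \widetilde{E\mathcal{F}}\bigr)^H \simeq K \wedge \bigl(\widetilde{E\mathcal{F}}\bigr)^H \simeq K,
\]
where the last equivalence uses $\bigl(\widetilde{E\mathcal{F}}\bigr)^H \simeq S^0$, since $H \notin \mathcal{F}$. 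Assembling the pieces recovers the stated equivalence.

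The only subtle step is that Proposition~\ref{prop:FixedPoints} is formulated for the ambient group $G$, while we must apply it at the subgroup $H$ to the $H$-spectrum $i_H^\ast i_\ast K$. This requires the formal compatibility $i_H^\ast \circ i_\ast \simeq i_\ast$ (with the right-hand $i_\ast$ now denoting the $H$-equivariant pushforward), which is a direct consequence of the functoriality of the pushforward--fixed-point adjunctions under restriction along $H \subset G$. Once this compatibility is granted, the argument is a short piece of symmetric monoidal bookkeeping.
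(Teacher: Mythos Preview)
Your argument is correct and is essentially the route the paper intends: the corollary is placed immediately after Proposition~\ref{prop:FixedPoints} with no proof, and the implied derivation is exactly to rewrite $\Phi^H$ via the $\widetilde{E\mathcal{F}}$-model and then invoke that proposition (at the subgroup $H$, using the compatibility $i_H^\ast i_\ast \simeq i_\ast$ you note). A slight streamlining: you do not need the detour through strong monoidality and the separate computation of $\Phi^H(i_\ast K)$; one may apply Proposition~\ref{prop:FixedPoints} directly to the $H$-spectrum $\widetilde{E\mathcal{F}}\wedge E$ to obtain
\[
\Phi^H(i_\ast K\wedge E)\simeq\bigl(i_\ast K\wedge(\widetilde{E\mathcal{F}}\wedge E)\bigr)^H\simeq K\wedge(\widetilde{E\mathcal{F}}\wedge E)^H\simeq K\wedge\Phi^H(E),
\]
which is presumably the one-line argument the paper has in mind.
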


Combined with Proposition~\ref{prop:GeomFPAcyclics}, this gives another way to understand the acyclics for $i_\ast E$.

\begin{proposition}\label{prop:GFPAcyclicsforPushforwards}
A $G$-spectrum $Z$ is $i_\ast E$-acyclic if and only if for all $H\subset G$, $\Phi^H(Z)$ is $E$-acyclic:
\[
\ccZ_{i_\ast E}^G=\bigcap_{H\subset G} \big(\Phi^H\big)^{-1}(\ccZ_E).
\]
\end{proposition}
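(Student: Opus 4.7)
The plan is to reduce the claim directly to Proposition~\ref{prop:GeomFPAcyclics} by exploiting the ``movement'' of geometric fixed points past smash products with pushforwards that was recorded in the preceding corollary. The content of that corollary is exactly the tool needed: it allows us to compute the geometric fixed points of $Z\wedge i_\ast E$ in terms of $\Phi^H(Z)$ and $E$ itself, with no residual equivariance on $E$.

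First, I would unpack the definition. A $G$-spectrum $Z$ lies in $\ccZ_{i_\ast E}^G$ precisely when $Z\wedge i_\ast E$ is equivariantly contractible, and since geometric fixed points jointly detect equivariant contractibility, this is equivalent to
\[
\Phi^H(Z\wedge i_\ast E)\simeq \ast \quad\text{for every } H\subset G.
\]
Next, I would invoke the preceding corollary, using the symmetry of the smash product to rewrite $Z\wedge i_\ast E\simeq i_\ast E\wedge Z$ and obtain the identification
\[
\Phi^H(Z\wedge i_\ast E)\simeq E\wedge \Phi^H(Z).
\]
This smash product of ordinary spectra is contractible exactly when $\Phi^H(Z)\in\ccZ_E$, so assembling the conditions over all subgroups $H$ yields precisely the stated intersection formula.

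There is no real obstacle: the argument is essentially a one-line bookkeeping once the corollary on $\Phi^H(i_\ast K\wedge E)\simeq K\wedge \Phi^H(E)$ is in hand, together with the standard detection of equivariant contractibility by geometric fixed points that underlies Proposition~\ref{prop:GeomFPAcyclics}. The only mild subtlety is noting that the corollary, as stated, places the pushforward in the left slot, so one uses symmetry of the smash product to apply it to $Z\wedge i_\ast E$.
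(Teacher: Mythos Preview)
Your proposal is correct and is exactly the argument the paper intends: it presents the proposition as an immediate consequence of the preceding corollary on $\Phi^H(i_\ast K\wedge E)\simeq K\wedge\Phi^H(E)$ together with Proposition~\ref{prop:GeomFPAcyclics}. Your only addition is making explicit the use of symmetry of the smash product, which is harmless bookkeeping.
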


Proposition~\ref{prop:GFPAcyclicsforPushforwards} gives a readily checkable collection of criteria for acyclicity. In particular, since the geometric fixed points of the norm is well-understood, this quickly gives the following.

\begin{theorem}
If $E$ is any non-equivariant spectrum, then $L_{i_\ast E}$ and the associated $L_{i_\ast E}^f$ preserve $G$-equivariant commutative rings.
\end{theorem}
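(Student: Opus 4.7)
The plan is to apply Theorem~\ref{thm:LocalizationofOAlgebras} and Theorem~\ref{thm:FiniteLocalizationofOAlgebras} to the terminal $\Ninfty$ operad, whose indexing system declares every finite $H$-set admissible at every orbit $G/H$ and whose algebras are precisely the genuine $G$-equivariant commutative ring spectra. Both theorems then reduce the question to a single check: for every subgroup $H\subset G$ and every finite $H$-set $T$, the norm $N^T$ preserves the $H$-equivariant $i_H^\ast i_\ast E$-acyclics (and, for the finite-localization statement, the associated thick subcategory of compact acyclics).

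First I would use Proposition~\ref{prop:GFPAcyclicsforPushforwards} at the subgroup $H$: an $H$-spectrum $Z$ is $i_H^\ast i_\ast E$-acyclic if and only if $\Phi^K(Z)$ is non-equivariantly $E$-acyclic for every $K\subset H$. So the task becomes: assuming every $\Phi^K(Z)$ is $E$-acyclic, show that every $\Phi^K(N^T Z)$ is $E$-acyclic. Decomposing $T=\coprod_i H/L_i$ and applying Lemma~\ref{lem:Diagonal} smash-factor-by-smash-factor yields
\[
\Phi^K(N^T Z) \simeq \bigwedge_i \bigwedge_{g\in K\backslash H/L_i} \Phi^{K^g\cap L_i}(Z).
\]
Each smash factor lies in $\ccZ_E$ by the hypothesis on $Z$, and because $\ccZ_E$ is a tensor ideal in $\Sp$, the whole smash product is $E$-acyclic. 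This settles the Bousfield case, and the same calculation handles the finite case once one observes that $N^T$ preserves compactness (left adjoints sending finite cell spectra to finite cell spectra), hence also the thick subcategory of compact acyclics.

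I expect the only genuine obstacle to be the bookkeeping in Lemma~\ref{lem:Diagonal} when applied at the subgroup $H$: one must track the double cosets $K\backslash H/L_i$ correctly, and observe that $i_H^\ast$ and $i_\ast$ interact in the expected way so that the geometric-fixed-point criterion for pushforwards still applies after restriction to $H$. Beyond that, the proof is a mechanical coupling of Proposition~\ref{prop:GFPAcyclicsforPushforwards} with the diagonal formula, relying on nothing beyond the tensor-ideal property of non-equivariant acyclics.
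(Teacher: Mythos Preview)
Your proposal is correct and follows essentially the same route as the paper: reduce via Theorems~\ref{thm:LocalizationofOAlgebras} and~\ref{thm:FiniteLocalizationofOAlgebras}, invoke Proposition~\ref{prop:GFPAcyclicsforPushforwards} to translate acyclicity into a geometric-fixed-point condition, and then use the diagonal formula of Lemma~\ref{lem:Diagonal} together with the tensor-ideal property of $\ccZ_E$. Your explicit remark that $N^T$ preserves compactness (needed for the finite-localization case) is a small piece of bookkeeping the paper leaves implicit, but otherwise the arguments coincide.
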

\begin{proof}
Theorems~\ref{thm:LocalizationofOAlgebras} and \ref{thm:FiniteLocalizationofOAlgebras} show that a sufficient condition for $L_{i_\ast E}$ or $L_{i_\ast E}^f$ to preserve commutative rings is for the category of acyclics to be closed under all norms. In other words, we need to show that if $Z\in \m{\ccZ}_{i_\ast E}(G/H)$, and $H\subset K\subset G$, then 
\[
N_H^{K} Z\in\m{\ccZ}_{i_\ast E}(G/K).
\]
$G$ plays no role in this, since $i_K^\ast (i_\ast E)=i_\ast E$, so it suffices to check this for $K=G$. Proposition~\ref{prop:GFPAcyclicsforPushforwards} then shows that it suffices to show that if $Z$ is an $H$-acylic, then for all subgroups $K$ of $G$, we have
\[
\Phi^K \big(N_H^G Z\big)\in \ccZ_{E}.
\]
Lemma~\ref{lem:Diagonal} shows that we have an equivalence
\[
\Phi^K\big(N_H^G Z\big)\simeq \bigwedge_{g\in K\backslash G/H} \Phi^{K^g\cap H} Z.
\]
Proposition~\ref{prop:GFPAcyclicsforPushforwards} then tells us again that since $Z$ is an $H$-acyclic for $i_\ast E$, we know that for all subgroups $J$ of $H$ that $\Phi^J(Z)$ is $E$-acyclic. In particular, for any $K$ and any double coset, we know $\Phi^{K^g\cap H}Z$ is $E$-acyclic, giving the result.
\end{proof}

This gives us a nice selection of equivariant chromatic types that preserve commutative ring spectra.

\begin{corollary}
If $V$ is any type $n$-spectrum that is not type $(n+1)$ (at some prime $p$ if $n>0$), then the finite chromatic localization $L_{i_\ast V}^f$ preserves commutative ring spectra.
\end{corollary}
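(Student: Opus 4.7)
The corollary is immediate from the theorem that directly precedes it. The plan is simply to invoke that theorem with $E=V$ and note that the terminology ``finite chromatic localization'' refers precisely to finite localization at a type $n$, non-type $(n+1)$ spectrum: by the classical thick subcategory theorem of Hopkins-Smith, any two non-equivariant type $n$ spectra generate the same thick ideal in $\Sp_c$, so the finite localization $L_V^f$ depends (up to equivalence) only on the chromatic type of $V$ and agrees with the classical chromatic $L_{n-1}^f$. Therefore the equivariant finite localization $L_{i_\ast V}^f$ is well-defined independently of the choice of $V$ among type $n$ spectra.

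Having made this identification, the preceding theorem, applied to the non-equivariant spectrum $V$, produces a $G$-spectrum $i_\ast V$ whose finite localization $L_{i_\ast V}^f$ preserves $G$-equivariant commutative ring spectra. The mechanism established in the proof of that theorem is that the category of $i_\ast V$-acyclics is closed under the norm functors $N_H^G$: this was verified using Proposition~\ref{prop:GFPAcyclicsforPushforwards} to reduce acyclicity to a condition on geometric fixed points of all subgroups, combined with the diagonal formula of Lemma~\ref{lem:Diagonal} to rewrite $\Phi^K(N_H^G Z)$ as a smash product of terms of the form $\Phi^{K^g\cap H}Z$. Applying Theorem~\ref{thm:FiniteLocalizationofOAlgebras} to the $N_\infty$ operad corresponding to genuine commutative rings then yields the conclusion.

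There is no additional step or obstruction: the corollary is a labeling of the special case of the preceding theorem most relevant to equivariant chromatic homotopy theory, intended to emphasize that the chromatic finite localizations $L_{n-1}^f$, when pushed to $G$-spectra via $i_\ast$, behave as well as one could hope with respect to norms and commutative ring structures.
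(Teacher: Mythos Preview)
Your proposal is correct and matches the paper's approach exactly: the paper states this corollary without proof, treating it as an immediate specialization of the preceding theorem with $E=V$. Your additional remarks about the thick subcategory theorem and the mechanism of the theorem's proof are accurate context but not logically required.
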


These equivariant chromatic types are the first one considers, as they are lifted directly from the unstable information. Work of Balmer and Sanders describes all of the thick subcategories equivariantly, and we turn now to understanding which of their localizations preserve $\cO$-algebras.

\section{Equivariant thick subcategories}
\subsection{The Balmer-Sanders' Classification}
The equivariant thick subcategories of $\Sp^G_{c}$ have been classified by Balmer-Sanders using Balmer's notion of the spectrum of a tensor triangulated category \cite{BalmerSanders}, where again $\Sp^{G}_{c}$ is the full subcategory of compact objects in $G$-spectra. The Balmer spectrum of a tensor triangulated category should be thought of as an extension of the classical Zariski spectrum to a context which formally looks like the derived category of modules over a ring \cite{BalmerSpec}. Balmer describes a notion of a ``prime'' tensor triangulated ideal, and these form the points in his Zariski spectrum. Out of this space, one can recover the thick subcategories of the (essentially small) tensor triangulated category.

The heart of the Balmer-Sanders result is that the geometric fixed points functors, being a tensor triangulated functor, induces maps
\[
\Spec(\Phi^H)\colon \Spec(\Sp_{c})\to\Spec(\Sp^G_{c})
\]
for all subgroups $H\subset G$. The ``Thick subcategory theorem'' of Hopkins-Smith determines all of the prime ideals in $\Sp_{c}$.

\begin{definition}
Let $p$ be a prime. For each $0\leq m<\infty$, let $K(m,p)$ denote a Morava $K$-theory of height $m$ at the prime $p$. Finally, for $m\geq 1$, define a full subcategory of finite spectra by
\[
C_{m,p}:=\{X \mid K(m-1,p)_\ast(X)=0\}.
\]
\end{definition}

\begin{theorem}[{\cite{NilpotenceII}}]
The prime ideals in $\Sp$ are given by $C_{m,p}$ for all primes $p$ and all natural numbers $m\geq 1$. Their inclusions and intersections are as follows:
\begin{enumerate}
\item For all primes $p$ and $q$, $C_{1,p}=C_{1,q}$, which is the category of torsion finite spectra.
\item If $m<m'$, then $C_{m',p}\subset C_{m,p}$. 
\item If $p$ and $q$ are distinct primes and $m$ and $n$ are greater than one, then $C_{m,p}$ is not contained in $C_{m',q}$.
\end{enumerate}
\end{theorem}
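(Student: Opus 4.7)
The plan is to split the theorem into three tasks: verify that each $C_{m,p}$ is a prime thick ideal, show that these exhaust the primes, and establish the three lattice-theoretic claims separately. Primality is the most formal piece: each $C_{m,p}$ is thick and a tensor ideal because it is the category of $K(m-1,p)$-acyclic finite spectra, and primality follows from the fact that $K(m-1,p)$ is a graded field. The Künneth isomorphism
\[
K(m-1,p)_\ast(X\wedge Y)\cong K(m-1,p)_\ast(X)\otimes_{K(m-1,p)_\ast} K(m-1,p)_\ast(Y)
\]
vanishes exactly when one of the tensor factors does.

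Exhaustiveness is the real content of \cite{NilpotenceII}. I would argue one prime at a time: the Hopkins-Smith classification says that the thick subcategories of $p$-local finite spectra form a totally ordered chain indexed by chromatic type, and integrally every thick ideal is determined by a compatible choice of chain-level at each prime together with an integral rational piece. Among these, only the ones with a single nontrivial chromatic coordinate are prime in Balmer's sense, because primality applied to smash products $V_p\wedge V_q$ of a $p$-local and a $q$-local finite, combined with the nilpotence theorem (which forces such mixed-prime smash products to vanish), pins the ideal to a single residue characteristic and a single height. These are exactly the $C_{m,p}$.

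For the three lattice claims: part (1) follows because $K(0,p)=H\mathbb{Q}$ is independent of $p$, so $C_{1,p}$ is the rationally acyclic finite spectra, which for finite spectra coincides with the torsion finite spectra. Part (2) reduces to the $p$-local Hopkins-Smith result that vanishing of $K(n,p)_\ast$ on a $p$-local finite forces the vanishing of $K(n',p)_\ast$ for all $n'<n$; since $K(n,p)$-homology depends only on $p$-localization, this gives $C_{m',p}\subset C_{m,p}$ when $m<m'$. Part (3) is witnessed concretely: pick a type-$m$ finite $p$-local spectrum $V_p$ (existence by Hopkins-Smith) and the mod-$q$ Moore spectrum $S/q$. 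Both summands of $V_p\vee S/q$ are $K(m-1,p)$-acyclic---$V_p$ by choice, and $S/q$ because $m\geq 2$ makes $K(m-1,p)$ a truly $p$-local theory while $S/q$ is $q$-local---so $V_p\vee S/q\in C_{m,p}$. On the other hand $K(m'-1,q)_\ast S/q\neq 0$ for $m'\geq 2$, so $V_p\vee S/q\notin C_{m',q}$.

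The main obstacle is exhaustiveness, which needs both the nilpotence theorem (to decompose primes by residue characteristic) and the full Hopkins-Smith thick subcategory theorem (to classify the $p$-local content). Both are packaged in \cite{NilpotenceII}, so the plan is to invoke these as input rather than reprove them here.
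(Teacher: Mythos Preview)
The paper does not give its own proof of this theorem: it is quoted verbatim as a background result from \cite{NilpotenceII} and is used only as input to the Balmer--Sanders description of $\Spec(\Sp^G_c)$. So there is nothing to compare your proposal against.

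Your sketch is essentially the standard account and is correct. Two small comments. First, in part~(3) the summand $V_p$ is superfluous: the mod-$q$ Moore spectrum $S/q$ alone already lies in $C_{m,p}$ for $m\geq 2$ (since $S/q$ is $p$-locally contractible and $K(m-1,p)$ is $p$-local) but not in $C_{m',q}$ for $m'\geq 2$ (since $S/q$ has type~$1$ at $q$). Second, your treatment of exhaustiveness is the only place where real work is being deferred; as you note, this genuinely requires both the nilpotence theorem and the thick subcategory theorem, and a full argument would need to spell out how a prime ideal of $\Sp_c$ localizes to a single prime $p$ (e.g.\ via the observation that $S/p \wedge S/q$ is contractible for $p\neq q$, forcing a prime ideal to contain all but one $S/p$) before invoking the $p$-local classification.
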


\begin{definition}[{\cite[Definition 4.1]{BalmerSanders}}]
For each subgroup $H$ of $G$, each prime $p$, and each natural number $m$, let
\[
\ChromPrime(H,m,p):=\big(\Phi^H\big)^{-1} C_{m,p}.
\]
\end{definition}

\begin{theorem}[{\cite[Theorem 4.9]{BalmerSanders}}]
For any finite group $G$, the spectrum of $\Sp^G_{c}$ is
\[
\Spec(\Sp^G_{c})=\{\ChromPrime(H,m,p) \mid H\subset G, m\in\mathbb N, p\text{ prime}\}.
\]
\end{theorem}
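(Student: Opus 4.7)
The plan is to leverage the Hopkins-Smith thick subcategory theorem, which identifies $\Spec(\Sp_c)$, together with the functoriality of the Balmer spectrum under the symmetric monoidal triangulated functors $\Phi^H \colon \Sp^G_c \to \Sp_c$. The inclusion ``$\supseteq$'' is essentially formal: because each $\Phi^H$ is a tt-functor, the preimage of any prime tt-ideal is again a prime tt-ideal, so each $\ChromPrime(H,m,p) = (\Phi^H)^{-1}(C_{m,p})$ is indeed a point of $\Spec(\Sp^G_c)$.

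The substantive content is the reverse inclusion: every prime of $\Sp^G_c$ must be of this form, or equivalently, the induced map
\[
\coprod_{H \subset G} \Spec(\Phi^H) \colon \coprod_{H \subset G} \Spec(\Sp_c) \longrightarrow \Spec(\Sp^G_c)
\]
is surjective. The starting point is joint conservativity of geometric fixed points: a $G$-spectrum $X$ is contractible if and only if every $\Phi^H X$ is contractible. However, surjectivity on $\Spec$ corresponds, in Balmer's framework, to the stronger property that the family jointly detects $\otimes$-nilpotence of morphisms, not merely zero objects. The natural route is therefore to establish an equivariant nilpotence theorem --- a morphism $f$ in $\Sp^G_c$ is $\otimes$-nilpotent whenever each $\Phi^H(f)$ is --- and then invoke Balmer's abstract ``going-up'' criterion to conclude that $\coprod_H \Spec(\Phi^H)$ is surjective. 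Combined with the Hopkins-Smith enumeration, this forces every prime of $\Sp^G_c$ to have the asserted form.

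The main obstacle is the equivariant nilpotence step. Each individual $\Phi^H$-image inherits detection of $\otimes$-nilpotence via Morava $K$-theories from Devinatz-Hopkins-Smith, but assembling these local detections into a global statement on $\Sp^G_c$ requires climbing the subgroup lattice. A natural approach is to induct on $|G|$: the isotropy separation cofiber sequence splits any object of $\Sp^G_c$ into a piece controlled by restrictions to proper subgroups $K \subsetneq G$, where the inductive hypothesis applies, and a ``purely geometric'' piece on which $\Phi^G$ becomes a symmetric monoidal equivalence after inverting the appropriate idempotent, so that the non-equivariant nilpotence theorem imports directly. Patching these together, together with compactness of the objects in play, should yield the equivariant nilpotence statement, after which Balmer's formalism delivers the classification with no further input.
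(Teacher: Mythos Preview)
The paper does not give a proof of this theorem at all: it is stated with the citation \cite[Theorem 4.9]{BalmerSanders} and immediately used as input for the subsequent analysis of chromatic localizations. There is therefore no ``paper's own proof'' against which to compare your proposal.

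That said, your sketch is a faithful outline of the actual Balmer--Sanders argument. The easy inclusion follows because each $\Phi^H$ is a tensor-triangulated functor, so $(\Phi^H)^{-1}$ of a prime is prime. For the hard inclusion, Balmer--Sanders indeed establish an equivariant nilpotence theorem --- a morphism in $\Sp^G_c$ is $\otimes$-nilpotent if and only if every $\Phi^H$ of it is --- and then invoke Balmer's abstract surjectivity criterion (detection of $\otimes$-nilpotence by a family of tt-functors forces the induced map on spectra to be surjective). Your proposed induction on $|G|$ via isotropy separation, reducing to proper subgroups on the $EG_+$-part and to the non-equivariant nilpotence theorem on the $\widetilde{E}G$-part, is exactly the standard mechanism for proving that equivariant nilpotence statement. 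One small caveat: the passage from ``joint conservativity on objects'' to ``joint detection of $\otimes$-nilpotence of morphisms'' is where the real work lies, and your sketch correctly flags this but leaves the details of patching the two pieces of the isotropy cofiber sequence (and controlling the nilpotence exponents uniformly) somewhat implicit; in a full proof this is the step requiring the most care.
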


Balmer-Sanders also determine the inclusions (up to a small indeterminacy), hence giving a complete classification of the thick subcategories. These are determined by the notion of ``support'' for an element.

\begin{definition}
If $X\in \Sp^G$, then the {\defemph{support}} of $X$ is the set of prime ideals not containing $X$:
\[
\Supp(X)=\{\mathfrak p | X\notin \mathfrak p\}.
\]
The {\defemph{vanishing locus}} of $X$ is the complement of the support:
\[
\coSupp(X)=\Spec(\Sp^G)-\Supp(X)=\{\mathfrak p | X\in\mathfrak p\}.
\]
\end{definition}

Thick subcategories are determined by the support. Thick subcategories are equivalent to the condition that their support be a Thomason closed subset. 

\begin{proposition}\label{prop:BalmerThick}
Let $X$ and $Y$ be finite $G$-spectra. Then the thick subcategory generated by $X$ contains $Y$ if and only if 
\[
\Supp(Y)\subset \Supp(X),
\]
or equivalently
\[
\coSupp(X)\subset\coSupp(Y).
\]
\end{proposition}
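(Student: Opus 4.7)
The proposal is to deduce this directly from Balmer's classification of thick tensor ideals of an essentially small rigid tensor triangulated category in terms of Thomason subsets of its spectrum, specialized to $\mathcal{T} = \Sp^G_c$. I read ``thick subcategory generated by $X$'' as the thick $\otimes$-ideal generated by $X$, consistent with the Balmer-Sanders framework recalled above; since $\Sp^G_c$ is rigid, every thick $\otimes$-ideal is radical and Balmer's classification applies without further qualification.

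The key input is that for any compact object $X$, the set $\coSupp(X) = \{\mathfrak{p} : X \in \mathfrak{p}\}$ is a quasi-compact open subset of $\Spec(\Sp^G_c)$, so $\Supp(X)$ is Thomason closed; in fact, these basic opens $\coSupp(X)$, as $X$ runs over compact objects, form a basis for the Balmer-Zariski topology. By Balmer's theorem, the Thomason subset $\Supp(X)$ corresponds to a unique thick $\otimes$-ideal $\mathcal{J}_X = \{Y : \Supp(Y) \subseteq \Supp(X)\}$. This ideal evidently contains $X$, and any thick $\otimes$-ideal containing $X$ has associated Thomason subset containing $\Supp(X)$; so $\mathcal{J}_X$ is precisely the thick $\otimes$-ideal generated by $X$. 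Therefore $Y$ lies in this ideal if and only if $\Supp(Y) \subseteq \Supp(X)$, and taking complements in $\Spec(\Sp^G_c)$ gives the equivalent formulation $\coSupp(X) \subseteq \coSupp(Y)$.

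There is no real obstacle once Balmer's classification is invoked. The only minor care needed is to match the conventions in \cite{BalmerSanders} for open versus closed subsets of the Balmer-Zariski topology with those of Balmer's foundational treatment of $\Spec$, after which the proposition is an immediate special case of the classification theorem.
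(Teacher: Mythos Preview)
Your proposal is correct, and it is exactly the intended justification: the paper does not supply a proof of this proposition at all, treating it as a direct consequence of Balmer's classification of thick $\otimes$-ideals via Thomason subsets of the spectrum, which is precisely what you invoke. Your care in noting that rigidity of $\Sp^G_c$ makes every thick $\otimes$-ideal radical, so that Balmer's bijection applies without further hypotheses, is the only point requiring comment, and you handle it correctly.
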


\subsection{Chromatic localizations and structured multiplications}
\subsubsection{General results for arbitrary $G$}
We now restrict attention to determining conditions which guarantee that the Bousfield and finite localizations with respect to an equivariant thick subcategory preserve $\cO$-algebras. 

\begin{lemma}\label{lem:NormsofThicks}
Let $\mathcal X$ be a thick subcategory of $\Sp^{G}_{c}$. Then the norm $N_K^H$ preserves $\mathcal X$ if for all $\ChromPrime(J,m,p)\in \coSupp(\mathcal X)$ with $J\subset H$, $\exists h\in H$ such that 
\[
\ChromPrime(K^{h}\cap J, m, p)\in\coSupp(\mathcal X).
\]
\end{lemma}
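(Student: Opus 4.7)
The plan is to rephrase ``$N_{K}^{H}$ preserves $\mathcal{X}$'' using the Balmer-Sanders classification as a statement about primes, then to compute with the diagonal formula from Lemma~\ref{lem:Diagonal}.

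First, by Proposition~\ref{prop:BalmerThick}, a compact $H$-spectrum $Z$ lies in the thick subcategory of $\Sp^{H}_{c}$ cut out by $\mathcal{X}$ if and only if $Z \in \ChromPrime(J, m, p)$ for every prime $\ChromPrime(J, m, p) \in \coSupp(\mathcal{X})$ with $J \subset H$; by definition of the Balmer-Sanders primes, this is the condition $\Phi^{J} Z \in C_{m, p}$. So for $Y$ in the corresponding thick subcategory of $\Sp^{K}_{c}$, my goal reduces to showing $\Phi^{J}(N_{K}^{H} Y) \in C_{m, p}$ for each such $J, m, p$.

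Next, applying Lemma~\ref{lem:Diagonal} to the norm $N_{K}^{H}$ gives
\[
\Phi^{J} N_{K}^{H} Y \simeq \bigwedge_{h \in J \backslash H / K} \Phi^{J^{h} \cap K}(Y).
\]
Since $C_{m, p}$ is a prime tensor ideal in $\Sp_{c}$, this smash product lies in $C_{m, p}$ as soon as one of its factors does. So it is enough to exhibit a double coset representative $h$ for which $\Phi^{J^{h} \cap K}(Y) \in C_{m, p}$, i.e.\ $Y \in \ChromPrime(J^{h} \cap K, m, p)$. Because $Y$ lies in the $K$-restriction of $\mathcal{X}$, this membership is automatic once $\ChromPrime(J^{h} \cap K, m, p) \in \coSupp(\mathcal{X})$.

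The last step is to match indices. Conjugation by $h \in H$ sends $J^{h} \cap K$ to $J \cap K^{h^{-1}} = K^{h^{-1}} \cap J$, and conjugate subgroups determine the same Balmer-Sanders prime since $\Phi^{L}$ and $\Phi^{gLg^{-1}}$ are naturally equivalent. Replacing $h$ by $h^{-1}$ then converts the existence of a suitable double coset representative into the existence of some $h \in H$ with $\ChromPrime(K^{h} \cap J, m, p) \in \coSupp(\mathcal{X})$, which is the stated hypothesis. The main (and really only) obstacle is this bookkeeping: the diagonal formula naturally produces primes indexed by $J^{h} \cap K$ while the lemma is phrased in terms of $K^{h} \cap J$, and reconciling the two requires invoking the conjugation-invariance of the Balmer-Sanders primes.
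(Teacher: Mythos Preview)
Your proof is correct and follows essentially the same route as the paper: reduce to a support condition via Proposition~\ref{prop:BalmerThick}, expand $\Phi^{J} N_{K}^{H}$ with the diagonal formula of Lemma~\ref{lem:Diagonal}, and use that $C_{m,p}$ is a prime $\otimes$-ideal so one acyclic factor suffices. Your explicit reconciliation of $J^{h}\cap K$ with $K^{h}\cap J$ via conjugation-invariance of the Balmer--Sanders primes is a point the paper leaves implicit, so your version is if anything slightly more careful.
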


\begin{proof}
By Proposition~\ref{prop:BalmerThick}, this is equivalent to 
\[
\coSupp(i_H^* X)\subset \coSupp(N_K^H i_H^\ast X).
\]
Thus a sufficient condition is that if $K(m,p)_\ast \Phi^JX=0$, then 
\[
K(m,p)_\ast \Phi^J N_K^H i_K^\ast X=0.
\]
By Lemma~\ref{lem:Diagonal}, we have an isomorphism
\begin{multline*}
K(m,p)_\ast \big(\Phi^J N_K^H i_K^\ast X\big)\cong \\ K(m,p)_\ast \left(\bigwedge_{KhJ\in K\backslash H/J} \Phi^{J^h\cap K} X\right)\cong \bigotimes_{KhJ\in K\backslash H/J} K(m,p)_\ast(\Phi^{J^h\cap K} X)
\end{multline*}
This vanishes if and only if there is an $h\in H$ such that $\ChromPrime(J^h\cap H, m,p)$ is in the vanishing locus of $X$,  as desired.
\end{proof}

As an immediate corollary, we deduce sufficient conditions for chromatic Bousfield and finite localizations to preserve $\cO$-algebras, by Theorems~\ref{thm:LocalizationofOAlgebras} and \ref{thm:FiniteLocalizationofOAlgebras}.

\begin{theorem}\label{thm:ChromaticBousfieldComm}
The Bousfield and finite localizations with respect to a thick subcategory $\mathcal X$ preserve $\cO$ algebras if for all $H/K\in\cO(G/H)$ and for all $\ChromPrime(J,m,p)\in \coSupp(\mathcal X)$ with $J\subset H$, $\exists h\in H$ such that 
\[
\ChromPrime(K^{h}\cap J, m, p)\in\coSupp(\mathcal X).
\]
\end{theorem}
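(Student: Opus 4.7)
The plan is to string together the two reduction tools already set up in the excerpt: Theorem~\ref{thm:LocalizationofOAlgebras}/Theorem~\ref{thm:FiniteLocalizationofOAlgebras} translate the problem into a purely categorical question about closure of $\mathcal X$ (and of the associated acyclics) under the norm functors $N^T$ for admissible $T$, and Lemma~\ref{lem:NormsofThicks} then resolves that question at the level of Balmer support.

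First I would invoke Theorem~\ref{thm:FiniteLocalizationofOAlgebras} for the finite localization $L_{\mathcal X}^f$, and Theorem~\ref{thm:LocalizationofOAlgebras} for the Bousfield localization $L_{\mathcal X}$, which reduce the preservation of $\cO$-algebras to showing that for every subgroup $H\subset G$ and every admissible $T\in\m{\cO}(G/H)$, the norm $N^T$ sends $\mathcal X$ (respectively the category of $\mathcal X$-acyclics) back into itself. Next I would observe that since an admissible $T$ decomposes as a disjoint union of admissible orbits $H/K_i$ and $N^T$ thereby decomposes as a smash product of the orbit norms $N^{H/K_i} = N_{K_i}^{H}\circ i_{K_i}^{\ast}$, and since both $\mathcal X$ and the category of acyclics are tensor ideals, it suffices to verify closure under each single orbit norm $N_K^H$ for $H/K\in\m{\cO}(G/H)$ admissible.

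At this point, for the finite localization, Lemma~\ref{lem:NormsofThicks} is precisely the statement that $N_K^H$ preserves $\mathcal X$ provided the hypothesis of our theorem holds, and we are done. For the Bousfield case I would run the same computation on acyclics: an object $Z$ is $\mathcal X$-acyclic if and only if $\Phi^J Z$ vanishes $K(m,p)$-locally for every $\ChromPrime(J,m,p)\in\coSupp(\mathcal X)$, and Lemma~\ref{lem:Diagonal} gives
\[
K(m,p)_\ast\Phi^J N_K^H i_K^\ast Z \cong \bigotimes_{KhJ\in K\backslash H/J} K(m,p)_\ast(\Phi^{J^h\cap K}Z),
\]
which vanishes as soon as one tensor factor does; the combinatorial hypothesis furnishes exactly such an $h$.

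The only step that requires any care is the reduction from admissible sets $T$ to admissible orbits, since one needs to know that the indexing system property ensures closure under the smash-product decomposition of $N^T$; this is where the tensor-ideal nature of both thick subcategories and categories of acyclics (and the closure of indexing systems under finite limits and subobjects) is essential. Once that reduction is in hand, Lemma~\ref{lem:NormsofThicks} does the rest mechanically, so the main conceptual content of the theorem is genuinely just the combination of the earlier results.
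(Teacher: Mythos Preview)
Your proposal is correct and follows essentially the same route as the paper, which simply records the theorem as an immediate corollary of Lemma~\ref{lem:NormsofThicks} together with Theorems~\ref{thm:LocalizationofOAlgebras} and~\ref{thm:FiniteLocalizationofOAlgebras}. You supply more detail than the paper does---in particular the reduction from a general admissible $T$ to admissible orbits $H/K$ via the tensor-ideal property, and the explicit rerunning of the $K(m,p)_\ast\Phi^J$ computation on the Bousfield acyclics---both of which the paper leaves implicit.
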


\subsubsection{Application: the $p$-local cases for $G=C_{p^n}$}
When $G$ is a cyclic group of order a power of a fixed prime $p$, then we can reduce the conditions from Theorem~\ref{thm:ChromaticBousfieldComm} to a collection of inequalities. These, together with the inclusions determined and conjectured by Balmer-Sanders, greatly cuts down the number of possible localizations which preserve commutative rings. 

For simplicity, we restrict attention to the $p$-local subcategory. Localized at primes $q\neq p$, the category of $C_{p^n}$ spectra splits, and the problem essentially becomes algebra. This splitting is in general not a splitting of $G$-symmetric monoidal categories, however, so it is not immediately clear how to couple this with the localization results above. Ongoing work of B\"ohme exactly addresses this point.

For $G=C_{p^n}$, a $p$-local thick subcategory is completely determined by a finite collection of extended natural numbers (so a natural number or infinity).

\begin{definition}
Let $\mathcal X$ be a $p$-local thick subcategory of $\Sp^{C_{p^n}}_{c}$. For each $0\leq k\leq n$, let 
\[
\ell_k=\max\Big\{\ell | \ChromPrime({C_{p^k}},\ell,p)\in\coSupp(\mathcal X)\Big\}
\]
provided this set is non-empty, and if for no $\ell$ is $\ChromPrime(C_{p^k},\ell,p)$ in the vanishing locus of $\mathcal X$, then let $\ell_k=-1$. Denote this sequence of extended integers by $\vec{\ell}(\mathcal X)$.

Conversely, given a sequence $\vec{\ell}=(\ell_0,\dots,\ell_n)$, let $\mathcal X_{\vec{\ell}}$ be defined by
\[
\coSupp(\mathcal X_{\vec{\ell}})=\bigcup_{i=0}^{n} \bigcup_{j=0}^{\ell_i} \{\ChromPrime(C_{p^i},j,p)\},
\]
where if $\ell_i=-1$, then that union is empty.
\end{definition}

By definition, if $\mathcal X$ is a $p$-local thick subcategory of $\Sp^{C_{p^n}}_{c}$, then 
\[
\mathcal X=\mathcal X_{\vec{\ell}(\mathcal X)}.
\]
However, not every sequence of integers works to give thick subcategories (as there is an implicit closure condition here). The Balmer-Sanders closure results give some loose conditions.

\begin{theorem}[]
For all $0\leq i\leq n-1$, if $\mathcal X_{\vec{\ell}}$ is a $p$-local thick subcategory, then we have inequalities
\[
\ell_i\leq \ell_{i+1}+1.
\]
\end{theorem}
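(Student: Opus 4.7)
The plan is to reduce the stated inequality to the one basic chromatic inclusion among Balmer primes for $C_{p^n}$: for every $0\leq i \leq n-1$ and every $m\geq 1$,
\[
\ChromPrime(C_{p^i},m,p)\subseteq \ChromPrime(C_{p^{i+1}},m-1,p).
\]
This is precisely the ``blueshift by one'' statement for $\Phi^{C_p}$: writing $\Phi^{C_{p^{i+1}}} = \Phi^{C_p}\circ \Phi^{C_{p^i}}$, and recalling that $X\in \ChromPrime(H,m,p)$ means $K(m-1,p)_\ast \Phi^H X = 0$, the required containment amounts to the assertion that a $K(m-1)$-acyclic $C_p$-spectrum $Y=\Phi^{C_{p^i}}X$ has $K(m-2)$-acyclic geometric fixed points $\Phi^{C_p}Y$. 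This is a standard consequence of Kuhn's blueshift theorem for $\Phi^{C_p}$, and is exactly the type of inclusion established (in the course of constructing $\Spec(\Sp^{C_{p^n}}_c)$) by Balmer--Sanders.

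Granting that inclusion, I would proceed as follows. First, observe that the vanishing locus of any thick subcategory $\mathcal X\subset \Sp^G_c$ is an open subset of $\Spec(\Sp^G_c)$ in the Balmer--Zariski topology; equivalently, $\coSupp(\mathcal X)$ is stable under taking larger primes: if $\mathfrak p\in\coSupp(\mathcal X)$ and $\mathfrak p\subseteq \mathfrak q$, then $\mathfrak q\in\coSupp(\mathcal X)$. This is immediate from the fact that the closure of $\{\mathfrak p\}$ in Balmer's topology is $\{\mathfrak q : \mathfrak q\subseteq \mathfrak p\}$ and that $\Supp(X)$ is closed for every $X$.

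Now fix $i$ and assume $\ell_i\geq 1$ (the cases $\ell_i=-1$ and $\ell_i=0$ make the inequality trivial, since $\ell_{i+1}\geq -1$ by definition). By definition of $\ell_i$, the prime $\ChromPrime(C_{p^i},\ell_i,p)$ lies in $\coSupp(\mathcal X_{\vec\ell})$. Applying the basic inclusion with $m=\ell_i$ together with the openness of $\coSupp$ yields
\[
\ChromPrime(C_{p^{i+1}},\ell_i-1,p)\in \coSupp(\mathcal X_{\vec\ell}),
\]
and maximality of $\ell_{i+1}$ forces $\ell_{i+1}\geq \ell_i-1$, as desired. The case $\ell_i=\infty$ is handled by the same argument applied for every $m$: since every $\ChromPrime(C_{p^i},m,p)$ lies in $\coSupp(\mathcal X_{\vec\ell})$, so does every $\ChromPrime(C_{p^{i+1}},m-1,p)$, forcing $\ell_{i+1}=\infty$ as well.

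The only non-trivial ingredient is the Balmer--Sanders inclusion, which I would either cite directly from their paper or, if spelled out, derive from Kuhn's blueshift; the rest is a formal unpacking of definitions together with the specialization-closure property of vanishing loci of thick subcategories. The main conceptual obstacle is keeping the direction conventions straight: that ``going up'' in the prime order corresponds to decreasing chromatic height under $\Phi^{C_p}$, and that vanishing loci are open (hence closed under going up), so the inequality propagates in the direction $\ell_i \leq \ell_{i+1}+1$ rather than the reverse.
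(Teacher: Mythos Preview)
The paper does not prove this theorem; it is stated as a consequence of the Balmer--Sanders closure results (the empty citation marker indicates it is imported from their paper), and the surrounding text explicitly attributes it to them. So there is no in-paper proof to compare against.

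Your argument is correct and is, in fact, a sketch of how Balmer--Sanders themselves obtain the inclusion: the single nontrivial input is the blueshift containment $\ChromPrime(C_{p^i},m,p)\subseteq \ChromPrime(C_{p^{i+1}},m-1,p)$, and once that is in hand the inequality follows from the fact that $\coSupp(\mathcal X)$ is upward-closed under inclusion of primes. One small comment: your phrasing that ``the vanishing locus of any thick subcategory is open'' is stronger than what you actually use and is not automatic for arbitrary tensor-triangulated categories; the property you need is only upward-closure, which follows directly from the definition $\coSupp(\mathcal X)=\{\mathfrak p:\mathcal X\subseteq\mathfrak p\}$ without any topological input. In the present setting (finite $G$, $p$-local), the spectrum is noetherian and every thick subcategory has a single generator, so openness does hold, but it would be cleaner to rely only on upward-closure.
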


Balmer-Sanders leave open if we can refine any of these further (for example, having inequalities like $\ell_0\leq \ell_2+1$), but leave this open as their ``$\log_p$-conjecture'' (\cite[Conjecture 8.7]{BalmerSanders}). We will not need anything more than these coarse bounds; any refinements would simply further restrict the kinds of thick subcategories which produce commutative ring spectra.

Applying the analysis for Theorem~\ref{thm:ChromaticBousfieldComm}, we deduce the following.

\begin{lemma}
Let $\mathcal X$ be a $p$-local, thick subcategory of $\Sp^{C_{p^n}}_{c}$. The norm $N_{C_{p^k}}^{C_{p^j}}$ preserves $\mathcal X$ if 
\[
\ell_k\geq \ell_{k+1}, \dots, \ell_{j}.
\]
\end{lemma}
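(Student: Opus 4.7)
The plan is to deduce this directly from Lemma~\ref{lem:NormsofThicks}, applied with $H = C_{p^j}$ and $K = C_{p^k}$ (where $k \le j$). According to that lemma, it suffices to show that for every prime $\ChromPrime(J, m, p) \in \coSupp(\mathcal X)$ with $J \subset C_{p^j}$, there exists $h \in C_{p^j}$ such that $\ChromPrime(C_{p^k}^h \cap J, m, p) \in \coSupp(\mathcal X)$.

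The first simplification is that $C_{p^n}$ is abelian, so $C_{p^k}^h = C_{p^k}$ for every $h \in C_{p^j}$, which removes the choice of $h$. The second is that the subgroups of a cyclic $p$-group are totally ordered, so every such $J$ is of the form $C_{p^i}$ with $0 \le i \le j$, and
\[
C_{p^k} \cap C_{p^i} = C_{p^{\min(k,i)}}.
\]
Rewriting everything in terms of the sequence $\vec\ell(\mathcal X)$, the membership $\ChromPrime(C_{p^i}, m, p) \in \coSupp(\mathcal X)$ becomes the inequality $m \le \ell_i$, and the desired condition becomes the implication
\[
m \le \ell_i \;\Longrightarrow\; m \le \ell_{\min(k,i)} \quad \text{for all } 0 \le i \le j.
\]

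The proof then splits into two cases according to whether $i \le k$ or $k < i \le j$. In the first case, $\min(k,i) = i$ and the implication is a tautology. In the second case, $\min(k,i) = k$, so the implication reduces to $\ell_i \le \ell_k$ for each $i$ with $k < i \le j$, which is exactly the hypothesis $\ell_k \ge \ell_{k+1}, \dots, \ell_j$. I do not anticipate a serious obstacle: the entire content is the combinatorial reduction of Lemma~\ref{lem:NormsofThicks} using abelianness and the total ordering of subgroups of $C_{p^n}$; the only thing to be careful about is consistently translating membership of primes in $\coSupp(\mathcal X)$ into inequalities for $\vec\ell(\mathcal X)$.
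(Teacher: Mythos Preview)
Your proposal is correct and follows exactly the same approach as the paper: the paper's proof is the one-line remark that ``since every subgroup is normal and the subgroups are nested, this is an immediate application of Lemma~\ref{lem:NormsofThicks},'' and you have simply unpacked that sentence by noting that abelianness kills the conjugation and that the total ordering of subgroups reduces $K\cap J$ to $C_{p^{\min(k,i)}}$, then translating into the $\ell$-inequalities.
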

\begin{proof}
Since every subgroup is normal and the subgroups are nested, this is an immediate application of Lemma~\ref{lem:NormsofThicks}.
\end{proof}

Putting this all together gives a condition for chromatic localizations to preserve commutative ring spectra.

\begin{corollary}
Let $\mathcal X$ be a $p$-local, thick subcategory of $\Sp^{C_{p^n}}_{c}$. Then the finite and Bousfield localizations nullifying $\mathcal X$ preserve commutative ring spectra if for all $0\leq i\leq n-1$, 
\[
\ell_{i+1}\leq \ell_{i}\leq \ell_{i+1}+1.
\]
\end{corollary}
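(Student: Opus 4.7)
The plan is to invoke Theorems~\ref{thm:LocalizationofOAlgebras} and \ref{thm:FiniteLocalizationofOAlgebras} applied to a genuine $G$-$E_\infty$ operad, i.e., an $\Ninfty$ operad whose indexing system $\m{\cO}$ is maximal (every finite $H$-set is admissible). Since $\cO$-algebras for this operad are precisely the genuine equivariant commutative ring spectra, it suffices to verify that for every subgroup $H\subset C_{p^n}$ and every finite $H$-set $T$, the norm $N^{T}$ sends $\mathcal X$ to itself. Decomposing $T$ as a disjoint union of $H$-orbits and using that $N^{T_1\amalg T_2}\simeq N^{T_1}\wedge N^{T_2}$, together with the fact that $\mathcal X$ is a tensor ideal, reduces the verification to the orbit case.

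Next, I would use that every subgroup of $C_{p^n}$ is of the form $C_{p^k}$ for some $0\le k\le n$, and the subgroup lattice is totally ordered. Hence the only norms to check are $N_{C_{p^k}}^{C_{p^j}}$ for $0\le k\le j\le n$. By the preceding lemma, each of these preserves $\mathcal X$ provided
\[
\ell_k\ge \ell_{k+1},\ell_{k+2},\dots,\ell_j.
\]

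Finally, I would observe that the hypothesis $\ell_{i+1}\le \ell_i$ for all $0\le i\le n-1$ is precisely the statement that the sequence $\vec{\ell}(\mathcal X)$ is non-increasing; hence for any $k\le j$ we have $\ell_k\ge \ell_{k+1}\ge\cdots\ge\ell_j$, as required. The companion inequality $\ell_i\le \ell_{i+1}+1$ is the Balmer-Sanders closure condition already needed for $\mathcal X_{\vec{\ell}}$ to define a thick subcategory, so it carries no further content here. The proof is therefore a direct bookkeeping assembly of the lemma on norms of thicks with the reduction to orbit norms; there is no genuine obstacle, and the main thing to keep straight is the direction of the inequality chain and the fact that for the chain of subgroups of $C_{p^n}$ the maximum of $\ell_{k+1},\dots,\ell_j$ collapses to $\ell_{k+1}$ under the non-increasing hypothesis.
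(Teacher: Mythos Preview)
Your proposal is correct and follows essentially the same route as the paper: the paper states the corollary immediately after the lemma on when $N_{C_{p^k}}^{C_{p^j}}$ preserves $\mathcal X$, with only the phrase ``Putting this all together'' as proof, and your argument is exactly the intended assembly of that lemma with Theorems~\ref{thm:LocalizationofOAlgebras} and \ref{thm:FiniteLocalizationofOAlgebras} (equivalently, Theorem~\ref{thm:ChromaticBousfieldComm} specialized to the complete indexing system). Your observation that only the inequality $\ell_{i+1}\le\ell_i$ is used to force the norms to preserve $\mathcal X$, while $\ell_i\le\ell_{i+1}+1$ is merely the Balmer--Sanders closure constraint, is a helpful clarification the paper leaves implicit.
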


The case that the sequence is constant is one we have already studied: these are the thick subcategories generated by the pushforward for a type $(n+1)$-complex. The other cases are new.

When $n=1$, there is even less to check. In this case, we have only $2$ extended integers.

\begin{corollary}
For $C_p$, the only chromatic localizations which preserve commutative ring spectra are
\[
L_{\mathcal X_{(n,n)}} \text{ and } L_{\mathcal X_{(n+1,n)}}
\]
for all $n$.
\end{corollary}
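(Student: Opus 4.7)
The plan is to reduce the $C_p$ case to the previous corollary and observe that for a group with only two subgroups there is essentially nothing to check. For $G = C_p$, a $p$-local thick subcategory $\mathcal X \subset \Sp^{C_p}_c$ is completely described by the pair $\vec{\ell}(\mathcal X) = (\ell_0, \ell_1)$, and the Balmer–Sanders inequality with $n=1$ gives the single constraint $\ell_0 \leq \ell_1 + 1$. I would then invoke the preceding corollary with $n=1$: the only index to check in the range $0 \leq i \leq n-1$ is $i=0$, and the sufficient condition there reads $\ell_1 \leq \ell_0 \leq \ell_1 + 1$.

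Next, I would parametrize the solutions. Setting $n := \ell_1$, the constraint $\ell_0 - \ell_1 \in \{0,1\}$ leaves exactly the two families $\vec{\ell} = (n,n)$ and $\vec{\ell} = (n+1, n)$, corresponding to the thick subcategories $\mathcal X_{(n,n)}$ and $\mathcal X_{(n+1,n)}$. Both satisfy the Balmer–Sanders bound automatically, so each defines a genuine $p$-local thick subcategory.

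For the word \emph{only} I would argue necessity, which is where the slight subtlety lies. The condition in Lemma~\ref{lem:NormsofThicks} is in fact an equivalence: the support containment $\coSupp(i_H^\ast X) \subset \coSupp(N_K^H i_H^\ast X)$ and the Künneth-style computation of $K(m,p)$-homology of $\Phi^J N_K^H i_K^\ast X$ run in both directions. Applied to $H = C_p$, $K = \{e\}$, $J = C_p$, $m = \ell_1$, this forces $\ell_1 \leq \ell_0$. Combined with the cited necessity direction from White's thesis—which guarantees that failure of $\m{\ccZ}$ to be closed under the norms corresponding to admissible sets implies failure of the localization to preserve commutative rings—we conclude that no other $\vec{\ell}$ can arise.

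The only step requiring care is this last necessity argument, since the preceding corollary only states sufficiency; but the bookkeeping for $C_p$ is trivial because the admissible sets for the maximal $\Ninfty$ operad are just the $C_p$-sets, and the only norm beyond the identity is $N_{\{e\}}^{C_p}$. Thus there is really only a single inequality to verify in each direction, and the classification drops out immediately.
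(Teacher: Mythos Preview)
Your proposal is correct and is exactly the argument the paper intends: the paper offers no proof for this corollary, treating it as the $n=1$ specialization of the preceding $C_{p^n}$ corollary, which yields the single constraint $\ell_1 \leq \ell_0 \leq \ell_1 + 1$ and hence the two families $(n,n)$ and $(n+1,n)$.

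You are in fact more careful than the paper on one point. The preceding results are all stated only as sufficient conditions, so the word ``only'' in the statement is not literally justified by what comes before. Your observation that the K\"unneth computation in Lemma~\ref{lem:NormsofThicks} is an if-and-only-if (so that $\ell_1 > \ell_0$ genuinely forces $N_{\{e\}}^{C_p}$ to fail to preserve $\mathcal X$), together with the necessity direction attributed to White in the discussion around Theorem~\ref{thm:LocalizationofOAlgebras}, is exactly what is needed to close that gap. The paper leaves this implicit.
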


As a final corollary, the finite localizations by construction are smashing. Thus all of these results can be restated in terms of certain structured multiplications on the localized sphere spectrum.

\begin{corollary}
For $G=C_{p^n}$, let $\vec{\ell}$ denote a sequence of extended integers such that for all $0\leq i\leq n$, we have
\[
\ell_{i+1}\leq \ell_i\leq \ell_{i+1}+1,
\]
and assume that the Balmer-Sanders ``$\log_p$-conjecture'' holds. Then the chromatically localized spheres 
\[
L^f_{\mathcal X_{\vec{\ell}}}S^0
\]
are $C_p$-equivariant commutative ring spectra.
\end{corollary}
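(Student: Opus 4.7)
The plan is to observe that the corollary is essentially immediate from the preceding corollary once one notes that $S^0$ is itself a (trivially) $G$-equivariant commutative ring spectrum.

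First, I would record that the sphere spectrum $S^0$, being the monoidal unit of $\Sp^{C_{p^n}}$, is canonically an object of $\Comm^{C_{p^n}}$. Next, I would invoke the assumption of the Balmer--Sanders $\log_p$-conjecture: by itself, not every sequence of extended integers $\vec{\ell}$ satisfying $\ell_{i+1}\le\ell_i\le\ell_{i+1}+1$ is automatically known to correspond to an actual thick subcategory of $\Sp^{C_{p^n}}_{c}$, but under the $\log_p$-conjecture these coarse bounds are exactly the complete list of constraints on $\vec{\ell}(\mathcal X)$, so the formally defined $\mathcal X_{\vec{\ell}}$ is genuinely thick. Then by the preceding corollary (applied to this $\mathcal X_{\vec{\ell}}$), the finite localization $L^f_{\mathcal X_{\vec{\ell}}}$ preserves commutative ring spectra; in particular it sends $S^0$ to an object of $\Comm^{C_{p^n}}$, which is the desired conclusion.

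The only subtlety worth isolating is the role of the $\log_p$-conjecture, which is strictly a statement about the geometry of $\Spec(\Sp^{C_{p^n}}_{c})$ and not about multiplicative structure. Without it, one still obtains the corollary whenever the particular $\vec{\ell}$ under consideration is known to arise from a thick subcategory; the inequalities $\ell_{i+1}\le\ell_i\le\ell_{i+1}+1$ remain sufficient for $L^f_{\mathcal X_{\vec{\ell}}}$ to preserve commutativity by the earlier analysis via Lemma~\ref{lem:NormsofThicks} and Theorem~\ref{thm:FiniteLocalizationofOAlgebras}, so the only thing the conjecture buys us is that we may quantify over all such sequences uniformly. There is no real obstacle in the argument itself; the whole content sits in the preceding results, and this corollary merely repackages them by specializing to the unit object and emphasizing that the smashing property of $L^f_{\mathcal X_{\vec{\ell}}}$ makes $L^f_{\mathcal X_{\vec{\ell}}} S^0$ the carrier of all of this structured multiplication information.
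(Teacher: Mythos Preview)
Your proposal is correct and matches the paper's approach: the paper offers no proof beyond the remark that finite localizations are smashing, so the preceding corollary applied to the unit $S^0$ yields the result. Your additional explanation of why the $\log_p$-conjecture is invoked (to guarantee that every such $\vec{\ell}$ actually defines a thick subcategory) is accurate and fills in a point the paper leaves implicit.
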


\bibliographystyle{plain}
\bibliography{Localization}

\end{document}